\definecolor{lightgray}{rgb}{0.8, 0.8, 0.8}
\definecolor{darkgray}{rgb}{0.7, 0.7, 0.7}
\definecolor{darkblue}{rgb}{0, 0, .4}
\newcounter{todocounter}
\theoremstyle{plain}
\newtheorem{theorem}{Theorem}[section]
\newtheorem{proposition}[theorem]{Proposition}
\newtheorem{corollary}[theorem]{Corollary}
\theoremstyle{definition}
\newfont{\footsc}{cmcsc10 at 8truept}
\newfont{\footbf}{cmbx10 at 8truept}
\newfont{\footrm}{cmr10 at 10truept}
\renewenvironment{abstract}%
                {
                  \begin{list}{}%
                     {\setlength{\rightmargin}{1in}%
                      \setlength{\leftmargin}{1in}}%
                   \item[]\ignorespaces\begin{small}}%
                 {\end{small}\unskip\end{list}}
\newcommand{\Av}{\operatorname{Av}}
\newcommand{\Age}{\operatorname{Age}}
\newcommand{\C}{\mathcal{C}}
\newcommand{\D}{\mathcal{D}}
\newcommand{\E}{\mathcal{E}}
\newcommand{\zpm}{0/\mathord{\pm} 1}
\newcommand{\cell}[2]{a_{#1#2}}
\newcommand{\zdpm}{0/\mathord{\bullet}/\mathord{\pm} 1} 
\newcommand{\Grid}{\operatorname{Grid}}
\newcommand{\Geom}{\operatorname{Geom}}
\newcommand{\Sub}{\operatorname{Sub}}
\newcommand{\st}{\::\:}
\newcommand{\equivfig}{\approx}
\newcommand{\bij}{\varphi}
\newcommand{\opinj}{\phi}
\newcommand{\gridded}{\sharp}
\newcommand{\gridlex}{\sqsubset}
\newcommand{\gridlexeq}{\sqsubseteq}
\newcommand{\emptyword}{\varepsilon}
\newcommand{\fnmatrix}[2]{\mbox{\begin{footnotesize}$\left(\begin{array}{#1}#2\end{array}\right)$\end{footnotesize}}}
\keywords{grid class, geometric grid class, permutation class, rational generating function, regular language}
\title{\sc Geometric Grid Classes of Permutations}
\author{%
Michael H. Albert\\[-0.25ex]
\small Department of Computer Science\\[-0.5ex]
\small University of Otago\\[-0.5ex]
\small Dunedin, New Zealand\\[1.5ex]
M. D. Atkinson\\[-0.25ex]
\small Department of Computer Science\\[-0.5ex]
\small University of Otago\\[-0.5ex]
\small Dunedin, New Zealand\\[1.5ex]
Mathilde Bouvel\\[-0.25ex]
\small CNRS, LaBRI\\[-0.5ex]
\small Universit\'e Bordeaux 1\\[-0.5ex]
\small Bordeaux, France\\[1.5ex]
Nik Ru\v{s}kuc\\[-0.25ex]
\small School of Mathematics and Statistics\\[-0.5ex]
\small University of St Andrews\\[-0.5ex]
\small St Andrews, Scotland\\[1.5ex]
Vincent Vatter\\[-0.25ex]
\small Department of Mathematics\\[-0.5ex]
\small University of Florida\\[-0.5ex]
\small Gainesville, Florida USA\\[-1.5ex]
}
\date{}
\begin{document}
\maketitle

\pagestyle{main}

\begin{abstract}
A geometric grid class consists of those permutations that can be drawn on a specified set of line segments of slope $\pm 1$ arranged in a rectangular pattern governed by a matrix.  Using a mixture of geometric and language theoretic methods, we prove that such classes are specified by finite sets of forbidden permutations, are partially well ordered, and have rational generating functions.  Furthermore, we show that these properties are inherited by the subclasses (under permutation involvement) of such classes, and establish the basic lattice theoretic properties of the collection of all such subclasses.
\end{abstract}

\section{Introduction}

Subsequent to the resolution of the Stanley-Wilf Conjecture, in 2004 by Marcus and Tardos~\cite{marcus:excluded-permut:}, two major research programs have emerged in the study of permutation classes:
\begin{itemize}
\item to characterise the possible growth rates of permutation classes, and
\item to provide necessary and sufficient conditions for permutation classes to have amenable generating functions.
\end{itemize}
With regard to the first program, we point to the work of Kaiser and Klazar~\cite{kaiser:on-growth-rates:}, who characterised the possible growth rates up to $2$, and the work of Vatter~\cite{vatter:small-permutati:}, which extends this characterisation up to the algebraic number $\kappa\approx 2.20557$, the point at which infinite antichains begin to emerge, and where the transition from countably many to uncountably many permutation classes occurs.  The second programme is illustrated by the work of Albert, Atkinson, and Vatter~\cite{albert:subclasses-of-t:}, who showed that all subclasses of the separable permutations not containing $\Av(231)$ or a symmetry of this class have rational generating functions.

Both research programs rely on structural descriptions of permutation classes, in particular, the notion of grid classes.  Here we study the enumerative and order-theoretic properties of a certain type of grid classes called geometric grid classes.

While we present formal definitions in the next two sections, geometric grid classes may be defined briefly as follows.  Suppose that $M$ is a $\zpm$ matrix.  The \emph{standard figure} of $M$, which we typically denote by $\Lambda$, is the point set in $\mathbb{R}^2$ consisting of:
\begin{itemize}
\item the increasing open line segment from $(k-1,\ell-1)$ to $(k,\ell)$ if $M_{k,\ell}=1$ or
\item the decreasing open line segment from $(k-1,\ell)$ to $(k,\ell-1)$ if $M_{k,\ell}=-1$.
\end{itemize}
(Note that in order to simplify this correspondence, we index matrices first by column, counting left to right, and then by row, counting bottom to top throughout.)  The \emph{geometric grid class} of $M$, denoted by $\Geom(M)$, is then the set of all permutations that can be drawn on this figure in the following manner.  Choose $n$ points in the figure, no two on a common horizontal or vertical line.  Then label the points from $1$ to $n$ from bottom to top and record these labels reading left to right.

Much of our perspective and inspiration comes from Steve Waton, who considered two particular geometric grid classes in his thesis~\cite{waton:on-permutation-:}.  These are the permutations which can be drawn from a circle, later studied by Vatter and Waton~\cite{vatter:on-points-drawn:}, and the permutations which can be drawn on an $\textsf{X}$, later studied by Elizalde~\cite{elizalde:the-x-class-and:}.  Examples of these two grid classes are shown in Figure~\ref{fig-circle-and-X}%
\footnote{The drawing on the left of Figure~\ref{fig-circle-and-X} has been drawn as a diamond to fit with the general approach of this paper, but by stretching and shrinking the $x$- and $y$-axes in the manner formalised in Section~\ref{sec-geometric-perspective}, it is clear that any permutation that can be ``drawn on a diamond'' can also be ``drawn on a circle''.}.

\begin{figure}
\begin{center}
\begin{tabular}{ccc}
\psset{xunit=0.017in, yunit=0.017in}
\psset{linewidth=0.005in}
\begin{pspicture}(0,0)(80,85)
\psline[linecolor=black,linestyle=solid,linewidth=0.02in](0,40)(40,0)
\psline[linecolor=black,linestyle=solid,linewidth=0.02in](0,40)(40,80)
\psline[linecolor=black,linestyle=solid,linewidth=0.02in](40,0)(80,40)
\psline[linecolor=black,linestyle=solid,linewidth=0.02in](40,80)(80,40)
\psline[linecolor=darkgray,linestyle=solid,linewidth=0.02in]{c-c}(0,0)(0,80)
\psline[linecolor=darkgray,linestyle=solid,linewidth=0.02in]{c-c}(40,0)(40,80)
\psline[linecolor=darkgray,linestyle=solid,linewidth=0.02in]{c-c}(80,0)(80,80)
\psline[linecolor=darkgray,linestyle=solid,linewidth=0.02in]{c-c}(0,0)(80,0)
\psline[linecolor=darkgray,linestyle=solid,linewidth=0.02in]{c-c}(0,40)(80,40)
\psline[linecolor=darkgray,linestyle=solid,linewidth=0.02in]{c-c}(0,80)(80,80)
\pscircle*(10,30){0.04in}\uput[45](10,30){$3$}
\pscircle*(20,60){0.04in}\uput[135](20,60){$5$}
\pscircle*(30,10){0.04in}\uput[45](30,10){$1$}
\pscircle*(50,70){0.04in}\uput[45](50,70){$6$}
\pscircle*(60,20){0.04in}\uput[135](60,20){$2$}
\pscircle*(70,50){0.04in}\uput[45](70,50){$4$}
\end{pspicture}
&\rule{0in}{0pt}&
\psset{xunit=0.017in, yunit=0.017in}
\psset{linewidth=0.005in}
\begin{pspicture}(0,0)(80,85)
\psline[linecolor=black,linestyle=solid,linewidth=0.02in](0,0)(40,40)
\psline[linecolor=black,linestyle=solid,linewidth=0.02in](0,80)(40,40)
\psline[linecolor=black,linestyle=solid,linewidth=0.02in](40,40)(80,0)
\psline[linecolor=black,linestyle=solid,linewidth=0.02in](40,40)(80,80)
\psline[linecolor=darkgray,linestyle=solid,linewidth=0.02in]{c-c}(0,0)(0,80)
\psline[linecolor=darkgray,linestyle=solid,linewidth=0.02in]{c-c}(40,0)(40,80)
\psline[linecolor=darkgray,linestyle=solid,linewidth=0.02in]{c-c}(80,0)(80,80)
\psline[linecolor=darkgray,linestyle=solid,linewidth=0.02in]{c-c}(0,0)(80,0)
\psline[linecolor=darkgray,linestyle=solid,linewidth=0.02in]{c-c}(0,40)(80,40)
\psline[linecolor=darkgray,linestyle=solid,linewidth=0.02in]{c-c}(0,80)(80,80)
\pscircle*(10,10){0.04in}\uput[135](10,10){$1$}
\pscircle*(20,60){0.04in}\uput[45](20,60){$5$}
\pscircle*(30,30){0.04in}\uput[135](30,30){$3$}
\pscircle*(50,50){0.04in}\uput[135](50,50){$4$}
\pscircle*(60,20){0.04in}\uput[45](60,20){$2$}
\pscircle*(70,70){0.04in}\uput[135](70,70){$6$}
\end{pspicture}
\end{tabular}
\end{center}
\caption[Two geometric grid classes.]{The permutation $351624$ on the left and the permutation $153426$ on the right lie, respectively, in the geometric grid classes of
$$
\fnmatrix{rr}{1&-1\\-1&1}\mbox{ and }\fnmatrix{rr}{-1&1\\1&-1}.
$$}
\label{fig-circle-and-X}
\end{figure}

A permutation class is said to be \emph{geometrically griddable} if it is contained in some geometric grid class.  With that final piece of terminology, we can state the main results of this paper.
\begin{itemize}
\item {\bf Theorem~\ref{thm-geom-griddable-pwo}.} \emph{Every geometrically griddable class is partially well ordered.}  (Such classes do not contain infinite antichains.)
\item {\bf Theorem~\ref{thm-geom-griddable-fin-basis}.} \emph{Every geometrically griddable class is finitely based.}  (These classes can be defined by only finitely many forbidden patterns.)
\item {\bf Theorem~\ref{thm-geom-griddable-reg-lang}.} \emph{Every geometrically griddable class is in bijection with a regular language, and thus has a rational generating function.}
\item {\bf Theorem~\ref{thm-geom-simple-reg-lang}.} \emph{The simple, sum indecomposable, and skew indecomposable permutations in every geometrically griddable class are each in bijection with a regular language, and thus have rational generating functions.}
\item {\bf Theorem~\ref{thm-geom-griddable-union-atomics}.} \emph{The atomic geometrically griddable classes are precisely the geometric grid classes of $\bullet$-isolated $\zdpm$ matrices, and every geometrically griddable class can be expressed as a finite union of such classes.}  (This type of geometric grid class is defined in Section~\ref{sec-atomic-decompositions}.)
\end{itemize}

For the remainder of the introduction we present the (standard) definitions of permutation classes.  In the next section we formalise the geometric notions of this paper.  Section~\ref{sec-forests} contains a brief discussion of grid classes of forests, while Section~\ref{sec-grid-pmm} introduces partial multiplication matrices.  Sections~\ref{sec-words-and-encodings}--\ref{sec-atomic-decompositions} introduce a correspondence between permutations in a geometric grid class and words, and utilise this correspondence to establish the main results of the paper.  Finally, in Section~\ref{sec-concluding-remarks}, we conclude with numerous open problems.

The permutation $\pi$ of $\{1,2,\dots,n\}$ \emph{contains} or \emph{involves} the permutation $\sigma$ of $\{1,2,\dots,k\}$ (written $\sigma\le\pi$) if $\pi$ has a subsequence of length $k$ which is order isomorphic to $\sigma$.  For example, $\pi=391867452$ (written in list, or one-line notation) contains $\sigma=51342$, as can be seen by considering the subsequence $\pi(2)\pi(3)\pi(5)\pi(6)\pi(9)=91672$.  A \emph{permutation class} is a downset of permutations under this containment ordering; thus if $\C$ is a permutation class, $\pi\in\C$, and $\sigma\le\pi$, then $\sigma\in\C$.  

For any permutation class $\C$ there is a unique (and possibly infinite) antichain $B$ such that
$$
\C=\Av(B)=\{\pi: \beta\not\le\pi\mbox{ for all } \beta \in B\}.
$$
This antichain $B$ is called the \emph{basis} of $\C$.  We denote by $\C_n$ (for $n \in \mathbb{N}$) the set of permutations in $\C$ of length $n$, and we refer to
$$
\sum_{n=0}^\infty |\C_n|x^n=\sum_{\pi\in\C} x^{|\pi|}
$$
as the \emph{generating function} of $\C$.

Finally, a permutation class, or indeed any partially ordered set, is said to be \emph{partially well ordered (pwo)} if it contains neither an infinite strictly descending chain nor an infinite antichain.  Of course, permutation classes cannot contain infinite strictly descending chains, so in this context being pwo is equivalent to having no infinite antichain.

\section{The Geometric Perspective}
\label{sec-geometric-perspective}

Geometric ideas play a significant role in this paper, and to start preparing the ground we reintroduce permutations and the involvement relation in a somewhat nonstandard manner.

We call a subset of the plane a \emph{figure}.  We say that the figure $\mathcal{F}\subseteq\mathbb{R}^2$ is \emph{involved} in the figure $\mathcal{G}$, denoted $\mathcal{F}\le\mathcal{G}$, if there are subsets $A,B\subseteq\mathbb{R}$ and increasing injections $\opinj_x:A\to\mathbb{R}$ and $\opinj_y:B\to\mathbb{R}$ such that
$$
\mathcal{F}\subseteq A\times B
\mbox{ and }
\opinj(\mathcal{F})
=
\{(\opinj_x(a), \opinj_y(b)) \st (a,b)\in\mathcal{F}\}
\subseteq
\mathcal{G}.
$$
The involvement relation is a \emph{preorder} (it is reflexive and transitive but not necessarily antisymmetric) on the collection of all figures.  If $\mathcal{F}\le\mathcal{G}$ and $\mathcal{G}\le\mathcal{F}$ then we say that $\mathcal{F}$ and $\mathcal{G}$ are \emph{(figure) equivalent} and write $\mathcal{F}\equivfig\mathcal{G}$.  Note that in the case of figures with only finitely many points, two figures are equivalent if and only if one can be transformed to the other by stretching and shrinking the axes.  Three concrete examples are provided below.
\begin{itemize}
\item The figures $\mathcal{F}=\{(a,|a|)\st a\in\mathbb{R}\}$ and $\mathcal{G}=\{(a,a^2)\st a\in\mathbb{R}\}$ are equivalent.  To see that $\mathcal{F}\le\mathcal{G}$, take $A\times B=\mathbb{R}\times\mathbb{R}_{\ge 0}$, $\opinj_x(a)=a$, and $\opinj_y(b)=b^2$.  To see that $\mathcal{G}\le\mathcal{F}$, take $A\times B=\mathbb{R}\times\mathbb{R}_{\ge 0}$, $\opinj_x(a)=a$, and $\opinj_y(b)=\sqrt{b}$.
\item The figures $\mathcal{F}=\{(a,a)\st a\in\mathbb{R}\}$ and $\mathcal{G}=\{(a,a)\st a\in[0,1]\cup[2,3]\}$ are equivalent.  To see that $\mathcal{F}\le\mathcal{G}$, take $A\times B=\mathbb{R}^2$, $\opinj_x(a)=1/(1+e^{-a})$, and $\opinj_y(b)=1/(1+e^{-b})$.  To see that $\mathcal{G}\le\mathcal{F}$, consider the identity map with $A\times B=\left([0,1]\cup[2,3]\right)^2$.
\item The ``unit diamond'' defined by $\mathcal{F}=\{(a,b)\st |a|+|b|=1\}$ (shown on the left of Figure~\ref{fig-circle-and-X}) is equivalent to the unit circle $\mathcal{G}=\{(a,b)\st a^2+b^2=1\}$.  To see that $\mathcal{F}\le\mathcal{G}$, take $A\times B=[-1,1]^2$, $\opinj_x(a)=\sin(\pi a/2)$, and $\opinj_y(b)=\cos(\pi(1-b)/2)$.  To see that $\mathcal{G}\le\mathcal{F}$, simply consider the inverses of these maps.
\end{itemize}

To any permutation $\pi$ of length $n$, we associate a figure which we call its \emph{plot}, $\{(i, \pi(i))\}$.  These figures have the important property that they are \emph{independent}, by which we mean that no two points lie on a common horizontal or vertical line.  It is clear that every finite independent figure is equivalent to the plot of a unique permutation, so we could \emph{define} a permutation as an equivalence class of finite independent figures.  Under this identification, the partial order on equivalence classes of finite independent figures is the same as the containment, or involvement, order defined in the introduction.

Every figure $\mathcal{F}\subseteq\mathbb{R}^2$ therefore naturally defines a permutation class,
$$
\Sub(\mathcal{F})
=
\{
\mbox{permutations $\pi$}
\st
\mbox{$\pi$ is equivalent to a finite independent subset of $\mathcal{F}$}
\}.
$$
For example:
\begin{itemize}
\item Let $\mathcal{F} = \{ (x,x) \st x \in \mathbb{R} \}$. Then $\Sub(\mathcal{F})$ contains a single permutation of each length, namely the identity, and its basis is $\{21\}$.
\item Let $\mathcal{F} = \{ (x, x) \st x \in [0,1] \} \cup \{ (x+1, x) \st x\in [0,1] \}$. Then $\Sub(\mathcal{F})$ consists of all permutations having at most one descent.  Its basis is $\{321, 2143, 3142\}$ as can be seen by considering the ways in which two descents might occur.
\item Let $\mathcal{F} = \{ (x, \sin(x) ) \st x \in \mathbb{R} \}$. Then $\Sub(\mathcal{F})$ is the set of all permutations; to establish this note that every permutations can be broken into its increasing contiguous segments (``runs''), points corresponding to each run can be chosen from increasing segments of $\mathcal{F}$.  Its basis is, of course, the empty set.
\end{itemize}
A geometric grid class (as defined in the introduction) is precisely $\Sub(\Lambda)$, where $\Lambda$ denotes the standard figure of the defining matrix.  Permutation classes of the form $\Sub(\mathcal{F})$ in the special case where $\mathcal{F}$ is the plot of a bijection between two subsets of the real numbers have received some study before; we refer the reader to Atkinson, Murphy, and Ru\v{s}kuc~\cite{atkinson:pattern-avoidan:} and Huczynska and Ru\v{s}kuc~\cite{huczynska:pattern-classes:}.

The lines $\{x=k\st k=0,\dots,t\}$ and $\{y=\ell\st\ell=0,\dots,u\}$ play a special role for standard figures, as they divide the figure into its cells.  We extend this notion of \emph{griddings} to all figures.  First though, we need to make a technical observation: because the real number line is order isomorphic to any open interval, it follows that every figure is equivalent to a bounded figure, and thus we may restrict our attention to bounded figures.

Let $R=[a,b]\times [c,d]$ be a rectangle in $\mathbb{R}^2$.  A $t\times u$-gridding of $R$ is a tuple
$$
G=(g_0,g_1,\ldots,g_t; h_0,h_1,\ldots,h_u)
$$
of real numbers satisfying
\begin{eqnarray*}
&a = g_0 \le g_1\le \cdots \le g_t=b,&\\
&c = h_0 \le h_1 \le \cdots \le h_u = d.&
\end{eqnarray*}
These numbers are identified with the corresponding set of vertical and horizontal lines partitioning $R$ into a rectangular collection of cells $C_{k,\ell}$ as shown in Figure \ref{fig-example-gridding}.  We often identify the gridding $G$ and the collection of cells $C_{k,\ell}$.

\begin{figure}
\begin{center}
\psset{xunit=0.017in, yunit=0.017in}
\psset{linewidth=0.005in}
\begin{pspicture}(-5,-5)(120,80)
\psline[linecolor=black,linestyle=dashed,linewidth=0.01in]{c-c}(30,0)(30,20)
\psline[linecolor=black,linestyle=dashed,linewidth=0.01in]{c-c}(110,0)(110,20)
\psline[linecolor=black,linestyle=dashed,linewidth=0.01in]{c-c}(0,20)(30,20)
\psline[linecolor=black,linestyle=dashed,linewidth=0.01in]{c-c}(0,70)(30,70)
\psline[linecolor=black,linestyle=solid,linewidth=0.02in]{c-c}(30,20)(110,20)(110,70)(30,70)(30,20)
\psline[linecolor=darkgray,linestyle=solid,linewidth=0.02in]{c-c}(50,0)(50,80)
\psline[linecolor=darkgray,linestyle=solid,linewidth=0.02in]{c-c}(85,0)(85,80)
\psline[linecolor=darkgray,linestyle=solid,linewidth=0.02in]{c-c}(0,50)(120,50)
\psaxes[linewidth=0.01in,ticks=none,showorigin=false,labels=none,arrowscale=2]{->}(0,0)(0,0)(120,80)
\rput[c](40,35){$C_{11}$}
\rput[c](67.5,35){$C_{21}$}
\rput[c](97.5,35){$C_{31}$}
\rput[c](40,60){$C_{12}$}
\rput[c](67.5,60){$C_{22}$}
\rput[c](97.5,60){$C_{32}$}
\uput[270](30,0){$a$}
\uput[270](50,0){$g_1$}
\uput[270](85,0){$g_2$}
\uput[270](110,0){$b$}
\uput[180](0,20){$c$}
\uput[180](0,50){$h_1$}
\uput[180](0,70){$d$}
\end{pspicture}
\end{center}
\caption{A $3\times 2$ gridding of the rectangle $[a,b]\times[c,d]$ with the cells $C_{k,\ell}$ indicated.}
\label{fig-example-gridding}
\end{figure}

A \emph{gridded figure} is a pair $(\mathcal{F},G)$ where $\mathcal{F}$ is a figure and $G$ is a gridding containing $\mathcal{F}$ in its interior.  To ensure that each point of $\mathcal{F}$ lies in a unique cell, we also require that the grid lines are disjoint from $\mathcal{F}$. By analogy with ungridded figures and permutations, we define the preorder $\le$ and the equivalence relation $\equivfig$ for $t\times u$ gridded figures.  The additional requirement is that the mapping $\phi=(\phi_x,\phi_y)$ appearing in the original definition maps the $(k,\ell)$ cell of one gridded figure to the $(k,\ell)$ cell of the other gridded figure.  A \emph{gridded permutation} is the equivalence class of a finite independent gridded figure.

The connection between finite figures, gridded finite figures, permutations, and gridded permutations can be formalised as follows.  Let $\Phi$ and $\Phi^\gridded$, respectively, denote the set of all finite independent and finite gridded independent figures in $\mathbb{R}^2$, and let $\mathcal{S}$ and $\mathcal{S}^\gridded$, respectively, denote the set of all permutations and all gridded permutations.  The (obvious) mappings connecting these sets are:
\begin{itemize}
\item $\Delta\st \Phi^\gridded\rightarrow \Phi$, given by removing the grid lines, i.e., $(\mathcal{F},G)\mapsto \mathcal{F}$;
\item $\delta\st\mathcal{S}^\gridded\rightarrow\mathcal{S}$, also given by removing the grid lines, i.e., in this context, mapping the equivalence class of $(\mathcal{F},G)$ under $\equivfig$ to the equivalence class of $\mathcal{F}$ under $\equivfig$;
\item $\pi^\gridded\st \Phi^\gridded\rightarrow \mathcal{S}^\gridded$, which sends every element of $\Phi^\gridded$ to its equivalence class under $\equivfig$;
\item $\pi\st \Phi\rightarrow \mathcal{S}$, which sends each element of $\Phi$ to its equivalence class under $\equivfig$.
\end{itemize}
It is a routine matter to verify that the diagram
$$
\psmatrix[colsep=1cm,rowsep=1cm]
\Phi^\gridded & \Phi \\
\mathcal{S}^\gridded & \mathcal{S}
\endpsmatrix
\psset{nodesep=3pt,arrows=->}
\ncline{1,1}{1,2}
\taput{\Delta}
\ncline{2,1}{2,2}
\tbput{\delta}
\ncline{1,1}{2,1}
\tlput{\pi^\gridded}
\ncline{1,2}{2,2}
\trput{\pi}
$$
commutes.

The griddings we are interested in restrict the content of cells in a way specified by a matrix.  Given a $\zpm$ matrix $M$, we say that the gridded figure $(\mathcal{F},G) $ is \emph{compatible} with $M$ if the following holds for all relevant $k$ and $\ell$:
$$
\mathcal{F}\cap C_{k,\ell} \mbox{ is } \left\{
\begin{array}{ll}
\mbox{increasing,} & \mbox{if } M_{k,\ell}=1,\\
\mbox{decreasing,} & \mbox{if } M_{k,\ell}=-1,\\
\mbox{empty}, & \mbox{if } M_{k,\ell}=0.
\end{array}
\right.
$$
We are concerned with the images, under the maps $\pi^\gridded$ and $\pi\Delta=\delta\pi^\gridded$, of the set of all $M$-compatible, finite, independent, gridded figures; we denote these two sets by $\Grid^\gridded(M)$ and $\Grid(M)$, respectively.  We refer to $\Grid(M)$ as the \emph{(monotone) grid class} of $M$.

In this context, the standard figure $\Lambda$ (as defined in the introduction) of a $\zpm$ matrix $M$ of size $t\times u$ has \emph{standard grid lines} $G$ given by $\{x=k\st k=0,\dots,t\}$ and $\{y=\ell\st \ell=0,\dots,u\}$.  We refer to the pair $(\Lambda,G)$ as the \emph{standard gridded figure} of $M$, and denote it by $\Lambda^\gridded$; note that $\Lambda^\gridded$ is compatible (in the sense above) with $M$.  The set of all gridded permutations contained in $\Lambda^\gridded$ is denoted by $\Geom^\gridded(M)$, and its image under $\delta$ is $\Geom(M)$, as defined in the introduction.  Because the standard gridded figure of a $\zpm$ matrix $M$ is compatible with $M$, we always have
$$
\Geom(M)\subseteq\Grid(M).
$$
In the next section we characterise the matrices for which equality is achieved.

\section{Grid Classes of Forests}
\label{sec-forests}

The first appearance of grid classes in the literature, in the special case where $M$ is a permutation matrix and under the name ``profile classes'', was in Atkinson~\cite{atkinson:restricted-perm:}.  Atkinson observed that such grid classes have polynomial enumeration.  Huczynska and Vatter~\cite{huczynska:grid-classes-an:}, who introduced the name ``grid classes'', generalised Atkinson's result to show that grid classes of signed permutation matrices have eventually polynomial enumeration.  This result allowed them to giver a more structural proof of Kaiser and Klazar's ``Fibonacci dichotomy''~\cite{kaiser:on-growth-rates:}, which states that for every permutation class $\C$, either $|\C_n|$ is greater than the $n$th Fibonacci number for all $n$, or $|\C_n|$ is eventually polynomial.

Atkinson, Murphy, and Ru\v{s}kuc~\cite{atkinson:partially-well-:} and Albert, Atkinson, and Ru\v{s}kuc~\cite{albert:regular-closed-:} studied the special case of grid classes of $\zpm$ vectors, under the name ``$W$-classes''.  The former paper proves that such grid classes are pwo and finitely based, while the latter shows that they have rational generating functions.

The first appearance of grid classes of $\zpm$ matrices in full generality was in Murphy and Vatter~\cite{murphy:profile-classes:} (again under the name ``profile classes'').  They were interested in the pwo properties of such classes, and to state their result we need a definition.


\begin{figure}[t]
\begin{center}
\begin{tabular}{ccccc}
\psset{xunit=0.02in, yunit=0.02in}
\psset{linewidth=0.005in}
\begin{pspicture}(0,0)(52,54)
\rput[l](0,27){%
	$\fnmatrix{rrrr}{0&-1&1&0\\1&-1&0&1\\0&0&0&-1}$
}
\end{pspicture}
&\rule{0in}{0pt}&
\psset{xunit=0.02in, yunit=0.02in}%
\psset{linewidth=0.005in}%
\begin{footnotesize}\begin{pspicture}(0,0)(67,54)
\psframe[linecolor=darkgray,linewidth=0.02in](0,20)(10,30)
\rput[c](5,25){$1$}
\psframe[linecolor=darkgray,linewidth=0.02in](20,20)(30,30)
\rput[c](25,25){$-1$}
\psframe[linecolor=darkgray,linewidth=0.02in](20,40)(30,50)
\rput[c](25,45){$-1$}
\psframe[linecolor=darkgray,linewidth=0.02in](40,40)(50,50)
\rput[c](45,45){$1$}
\psframe[linecolor=darkgray,linewidth=0.02in](60,20)(70,30)
\rput[c](65,25){$1$}
\psframe[linecolor=darkgray,linewidth=0.02in](60,0)(70,10)
\rput[c](65,5){$-1$}
\psline[linecolor=black,linewidth=0.02in]{c-c}(11,25)(19,25)
\psline[linecolor=black,linewidth=0.02in]{c-c}(31,25)(59,25)
\psline[linecolor=black,linewidth=0.02in]{c-c}(25,31)(25,39)
\psline[linecolor=black,linewidth=0.02in]{c-c}(31,45)(39,45)
\psline[linecolor=black,linewidth=0.02in]{c-c}(65,19)(65,11)
\end{pspicture}\end{footnotesize}
&\rule{0in}{0pt}&
\psset{xunit=0.02in, yunit=0.02in}
\begin{pspicture}(5,0)(70,50)
\pscircle*(5,10){0.04in}
\pscircle*(25,10){0.04in}
\pscircle*(45,10){0.04in}
\pscircle*(65,10){0.04in}
\pscircle*(15,40){0.04in}
\pscircle*(35,40){0.04in}
\pscircle*(55,40){0.04in}
\rput[c](5,5){$x_1$}
\rput[c](25,5){$x_2$}
\rput[c](45,5){$x_3$}
\rput[c](65,5){$x_4$}
\rput[c](15,45){$y_1$}
\rput[c](35,45){$y_2$}
\rput[c](55,45){$y_3$}
\psline[linecolor=black,linewidth=0.02in](5,10)(35,40) 
\psline[linecolor=black,linewidth=0.02in](25,10)(35,40) 
\psline[linecolor=black,linewidth=0.02in](25,10)(55,40) 
\psline[linecolor=black,linewidth=0.02in](45,10)(55,40) 
\psline[linecolor=black,linewidth=0.02in](65,10)(15,40) 
\psline[linecolor=black,linewidth=0.02in](65,10)(35,40) 
\end{pspicture}
\end{tabular}
\end{center}
\caption{A matrix together with its cell graph (centre) and row-column graph (right).}
\label{fig-graph-grid}
\end{figure}

The \emph{cell graph} of $M$ is the graph on the vertices $\{(k,\ell) : M_{k,\ell}\neq 0\}$ in which $(k,\ell)$ and $(i,j)$ are adjacent if the corresponding cells of $M$ share a row or a column and there are no nonzero entries between them in this row or column.  We say that the matrix $M$ is a \emph{forest} if its cell graph is a forest.  Viewing the absolute value of $M$ as the adjacency matrix of a bipartite graph, we obtain a different graph, its \emph{row-column graph}%
\footnote{In other words, the row-column graph of a $t\times u$ matrix $M$ is the bipartite graph on the vertices $x_1,\dots,x_t,y_1,\dots,y_u$ where there is an edge between $x_k$ and $y_\ell$ if and only if $M_{k,\ell}\neq 0$.}.
It is not difficult to show that the cell graph of a matrix is a forest if and only if its row-column graph is also a forest.  An example of each of these graphs is shown in Figure~\ref{fig-graph-grid}.  These graphs completely determine the pwo properties of grid classes:

\begin{theorem}[Murphy and Vatter~\cite{murphy:profile-classes:}, later generalised by Brignall~\cite{brignall:grid-classes-an:}]
\label{thm-forest-grids-pwo}
The class $\Grid(M)$ is pwo if and only if $M$ is a forest.
\end{theorem}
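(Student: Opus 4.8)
The statement is an equivalence, and essentially all of the work is in showing that a forest yields a partially well ordered class; I will be brief about the other direction.

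\textbf{If $M$ is not a forest, then $\Grid(M)$ is not pwo.} The plan is to produce an infinite antichain directly inside $\Grid(M)$. A cycle in the cell graph is a closed walk through nonzero cells in which consecutive cells alternately share a row and a column with no nonzero entries between them; geometrically it describes a way to ``go around in a loop'' through the standard figure. Threading the classical increasing oscillation $2\,4\,1\,6\,3\,8\,5\cdots$ of length $n$ repeatedly around this loop gives, for each $n$, a permutation $\pi_n\in\Grid(M)$ whose plot is rigid, in the sense that the cyclic arrangement of cells forces any containment $\pi_m\le\pi_n$ to be an embedding that ``shifts by a multiple of the loop length''. Perturbing the two extreme points of each $\pi_n$ --- the standard device for turning increasing oscillations into an antichain --- destroys all such embeddings with $m\neq n$, so $\{\pi_n\}$ is an infinite antichain. (The bookkeeping needed to see that the oscillation really can be anchored inside $\Grid(M)$ regardless of what else occupies the rows and columns met by the cycle is the one mildly technical point here.)

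\textbf{If $M$ is a forest, then $\Grid(M)$ is pwo.} First I would pass to the gridded setting: every permutation in $\Grid(M)$ is $\delta$ of some gridded permutation in $\Grid^\gridded(M)$, gridded involvement implies involvement, and for permutation classes being pwo is equivalent to every infinite sequence containing a pair $x_i\le x_j$ with $i<j$; hence it suffices to prove that $\Grid^\gridded(M)$ is pwo. I would do this by induction on the number of nonzero cells of $M$. If $M$ has a single nonzero cell then $\Grid^\gridded(M)$ is a chain. In general, choose a leaf $c$ of the cell graph. If $c$ has no neighbour then its row and column are otherwise empty, $\Grid^\gridded(M)$ is order-isomorphic to the product of $\Grid^\gridded(M')$ --- where $M'$ deletes the cell, row and column of $c$, still a forest with fewer cells, hence pwo by induction --- with the chain $\{0,1,2,\dots\}$, and a product of two pwo posets is pwo. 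Otherwise $c$ has a unique neighbour $c'$, and after applying a reverse, complement or inverse symmetry we may assume $c'$ lies immediately below $c$ in the same column $k$; then (being a leaf) $c$ is the only nonzero cell in its row, so deleting that row yields a forest $M'$ with fewer cells, pwo by induction. Now every $\pi^\gridded\in\Grid^\gridded(M)$ is recovered from its ``$M'$-part'' $\tau^\gridded\in\Grid^\gridded(M')$ by inserting the content of cell $c$ --- a monotone block whose points lie in column $k$, above every other point of that column and interleaved with them horizontally --- and the only extra data is, for each gap between consecutive column-$k$ points of $\tau^\gridded$, how many cell-$c$ points fall in it, i.e.\ a word over a finite alphabet. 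One checks that $\pi_1^\gridded\le\pi_2^\gridded$ holds precisely when there is an embedding $\tau_1^\gridded\hookrightarrow\tau_2^\gridded$ whose induced action on column-$k$ gaps allows the word of $\pi_1^\gridded$ to be absorbed into that of $\pi_2^\gridded$ (each letter bounded by the sum of the letters in the matching range of gaps).

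I expect this last point to be the main obstacle. The inductive hypothesis only says the $M'$-parts are pwo, so from a hypothetical bad sequence one can extract an infinite chain among the $M'$-parts; but the embeddings realising that chain need not be the ones that cooperate with the words, so a direct appeal to Higman's Lemma on the words does not close the argument. The remedy I would use is a minimal bad sequence argument (in the style of Nash--Williams) run for $\Grid^\gridded(M)$ itself: the $M'$-parts of a minimal bad sequence are forced to be as small as possible, which constrains their embeddings enough that, after applying Higman's Lemma to the words over the finite gap-alphabet, one can produce a comparable pair and contradict minimality. Equivalently, one strengthens the induction hypothesis to a ``pointed'' version asserting that $\Grid^\gridded(M')$ is pwo under the finer order that also remembers the column-$k$ structure, forcing the two embeddings to agree. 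Either way, once $\Grid^\gridded(M)$ is known to be pwo, the reduction in the previous paragraph gives that $\Grid(M)$ is pwo.
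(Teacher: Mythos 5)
First, a point of orientation: the paper does not actually prove this theorem. It is quoted from Murphy and Vatter (later Brignall), and the paper only reproves the ``if'' direction, indirectly, via Theorem~\ref{thm-forests-are-geoms} together with Theorem~\ref{thm-geom-griddable-pwo}: one shows $\Grid^\gridded(M)=\Geom^\gridded(M)$ when $M$ is a forest, encodes gridded permutations as words over the finite cell alphabet via the onto, order-preserving map $\bij^\gridded$, and invokes Higman's Theorem once, globally. Your argument for that direction takes a genuinely different route --- a structural induction peeling off a leaf of the cell graph --- and it contains a real gap, which to your credit you identify yourself. The decomposition of $\pi^\gridded$ into an $M'$-part $\tau^\gridded$ plus a gap-word is sound (though note the gap-word is a sequence of unbounded nonnegative integers, so its alphabet is $\mathbb{N}$ rather than finite; this is harmless only because $\mathbb{N}$ is pwo and Higman's Lemma applies to words over a pwo alphabet). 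The problem is exactly where you locate it: the inductive hypothesis produces \emph{some} comparable pair $\tau_i^\gridded\le\tau_j^\gridded$ but gives no control over which embedding realises it, while the gap-word condition must be verified for an embedding compatible with the gap structure. The remedy you name --- a Nash--Williams minimal bad sequence argument, or equivalently strengthening the induction to ``pwo for labellings by an arbitrary pwo set'' --- is the correct and standard one, but it is the entire content of the hard direction, and as written it is asserted rather than carried out. The encoding approach sidesteps all of this: since $\bij^\gridded$ maps $\Sigma^\ast$ onto $\Grid^\gridded(M)$ for forests, a single application of Higman's Theorem finishes the proof with no induction and no minimal bad sequences.

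Your sketch of the ``only if'' direction (threading increasing oscillations around a cycle of the cell graph and perturbing the extreme entries) is the right construction in outline, but establishing that the resulting permutations really form an antichain --- in particular that every embedding must respect the cyclic structure --- is the bulk of Murphy and Vatter's work, and the one sentence you devote to it does not discharge it. In summary: correct strategy on both fronts, genuinely different from the paper's (cited and partially reproved) treatment, but incomplete at precisely the two points where the theorem is hard.
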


It has long been conjectured that $\Grid(M)$ has a rational generating function if $M$ is a forest%
\footnote{Note that $\Grid(M)$ can have a nonrational generating function when $M$ is not a forest.  An example of this is given in the conclusion.};
for example, by Huczynska and Vatter~\cite[Conjecture 2.8]{huczynska:grid-classes-an:}.  Indeed, this conjecture was the original impetus for the present work.  However, as the work progressed, it became apparent that the geometric paradigm provided a viewpoint which was at once more insightful and more general, and thus our perspective shifted.  The link with the original motivation is provided by the following result.

\begin{theorem}
\label{thm-forests-are-geoms}
If $M$ is a forest then $\Grid^\gridded(M)=\Geom^\gridded(M)$, and thus $\Grid(M)=\Geom(M)$.
\end{theorem}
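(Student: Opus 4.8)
The plan is to show that for a forest matrix $M$, every gridded permutation in $\Grid^\gridded(M)$ can in fact be realized on the standard figure $\Lambda$, i.e.\ it lies in $\Geom^\gridded(M)$; the reverse inclusion $\Geom^\gridded(M)\subseteq\Grid^\gridded(M)$ is automatic from the discussion at the end of Section~\ref{sec-geometric-perspective}. The statement about ungridded classes then follows by applying the map $\delta$ to both sides. So fix an $M$-compatible finite independent gridded figure $(\mathcal{F},G)$; we must produce an $\equivfig$-equivalent gridded figure living inside $\Lambda^\gridded$. Since within a single cell $C_{k,\ell}$ the points of $\mathcal{F}$ form a monotone (increasing or decreasing) sequence of the correct orientation, the content of each cell can individually be slid onto the corresponding line segment of $\Lambda$ by a cell-internal stretching of the axes. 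The difficulty is global consistency: the horizontal rescaling chosen in cell $C_{k,\ell}$ must agree with the one chosen in every other cell $C_{k,\ell'}$ in the same column $k$, and likewise the vertical rescaling must be shared along each row. A naive independent treatment of the cells will not patch together.

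The key idea is to exploit the forest (acyclicity) hypothesis to build the rescalings without conflict, processing the cells in an order dictated by the row-column graph. First I would pass to the row-column graph of $M$, which by the remark in Section~\ref{sec-forests} is also a forest; root each of its tree components arbitrarily and linearly order its vertices $\{x_1,\dots,x_t,y_1,\dots,y_u\}$ so that each vertex comes after its parent. Each edge $x_k y_\ell$ of this graph corresponds to a nonempty cell $C_{k,\ell}$, and processing edges in the induced order means that when we come to handle cell $C_{k,\ell}$, exactly one of the two coordinate directions (the one corresponding to the parent vertex) has already been assigned a partial rescaling, while the other (the child) is still free. We then choose the free coordinate's values within the interval for that row or column precisely so as to place the finitely many points of $\mathcal{F}\cap C_{k,\ell}$ onto the segment of $\Lambda$ for that cell, given the already-fixed values of the parent coordinate. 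Because we only ever constrain a coordinate direction once (along the tree edge connecting it to its parent), and any further appearances of that vertex in later edges are handled by fixing the \emph{other} endpoint, no coordinate ever receives two incompatible demands. Running through all components and all edges yields increasing injections $\opinj_x$ on the $x$-coordinates appearing in $\mathcal{F}$ and $\opinj_y$ on the $y$-coordinates, and by construction $\opinj(\mathcal{F})\subseteq\Lambda$ with each point staying in its original cell; hence $(\mathcal{F},G)\equivfig(\opinj(\mathcal{F}),G^\Lambda)\in\Lambda^\gridded$.

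A couple of points need care. One must confirm monotonicity of the constructed maps: within a fixed cell this is exactly the compatibility hypothesis ($M_{k,\ell}=\pm1$ gives an increasing or decreasing arrangement matching the slope of the segment), and across cells sharing a coordinate direction the nesting of sub-intervals within $[g_{k-1},g_k]$ (resp.\ $[h_{\ell-1},h_\ell]$) keeps the global order intact — here it matters that distinct cells in the same row or column occupy disjoint coordinate windows once we restrict to the $x$- or $y$-projection of $\mathcal{F}$, which holds because $\mathcal{F}$ is independent and the grid lines avoid $\mathcal{F}$. One should also treat isolated vertices of the row-column graph (all-zero rows or columns), which are vacuous, and cells $C_{k,\ell}$ with $M_{k,\ell}=0$, which contain no points of $\mathcal{F}$ and impose no constraint. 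The main obstacle, as flagged above, is precisely the bookkeeping that guarantees the per-cell rescalings glue into two well-defined global increasing injections; the forest condition is exactly what makes this possible, and indeed Theorem~\ref{thm-forest-grids-pwo} shows some such hypothesis is unavoidable, since a cycle in the cell graph forces $\Grid(M)$ to be strictly larger (non-pwo) than the geometric class.
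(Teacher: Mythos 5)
Your proposal is correct and is essentially the paper's own argument: both exploit the forest structure so that each nonzero cell, when handled, still has one free coordinate direction --- you organise this as a root-down traversal of the row-column forest, while the paper runs the same idea as an induction on the number of nonzero entries, peeling off a leaf of the cell graph and fixing the leaf's private column by a single horizontal transformation after the rest of the figure has been placed. The one soft spot is your claim that distinct cells in the same row or column ``occupy disjoint coordinate windows'' in the relevant projection: two cells $C_{k_1,\ell}$ and $C_{k_2,\ell}$ in the same row have interleaving $y$-projections inside $(h_{\ell-1},h_\ell)$, and what actually rescues the gluing is that the finitely many $y$-values forced by the segment in the parent cell can always be supplemented by interpolated values for the points of the sibling cells --- a routine repair that does not affect correctness.
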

\begin{proof}
The proof is by induction on the number of nonzero entries of $M$.  For the case of a single nonzero cell, note that one can place any increasing (resp., decreasing) set of points on a line of slope $1$ (resp., $-1$) by applying a horizontal transformation.

Now suppose that $M$ has two or more nonzero entries, denote its standard gridded figure by $\Lambda_M^\gridded=(\Lambda_M,G)$, and let $(k,\ell)$ denote a leaf in the cell graph of $M$.  By considering the transpose of $M$ if necessary, we may assume that there are no other nonzero entries in column $k$ of $M$.  Let $\pi^\gridded$ be an arbitrary gridded permutation in $\Grid^\gridded(M)$.  We aim to show that $\pi^\gridded\in\Geom^\gridded(M)$.

Denote by $N$ the matrix obtained from $M$ by setting the $(k,\ell)$ entry equal to $0$, and denote its standard gridded figure by $\Lambda_N^\gridded=(\Lambda_N,G)$; note that the grid lines of $\Lambda_N^\gridded$ and $\Lambda_M^\gridded$ are identical because the corresponding matrices are the same size.  Let $\sigma^\gridded$ denote the gridded permutation obtained from $\pi^\gridded$ by removing all entries in the $(k,\ell)$ cell.  Because $\sigma^\gridded$ lies in $\Grid^\gridded(N)$, which by induction is equal to $\Geom^\gridded(N)$, there is a finite independent point set $S\subseteq\Lambda_N\subseteq\Lambda_M$ such that $(S,G)\equivfig \sigma^\gridded$.  So long as we do not demand that the new points belong to $\Lambda_M$, it is clear that we can extend $S$ by adding points in the $(k,\ell)$ cell to arrive at a point set $P\supseteq S$ such that $(P,G)\equivfig\pi^\gridded$.  Then we can apply a horizontal transformation to column $k$ to move these new points onto the diagonal line segment of $\Lambda_M$ in this cell.  This horizontal transformation does not affect the points of $S$, because none of those points lie in column $k$, so we see that $P\subseteq\Lambda_M$, and thus $\pi^\gridded\in\Geom^\gridded(M)$, as desired.
\end{proof}

\begin{figure}
\begin{center}
\psset{xunit=0.0226666666666in, yunit=0.0226666666666in}
\psset{linewidth=0.005in}
\begin{pspicture}(0,0)(60,63.75)
\psline[linecolor=black,linestyle=dashed,linewidth=0.01in]{c-c}(10.85,20)(55,20)
\psline[linecolor=black,linestyle=dashed,linewidth=0.01in]{c-c}(20,49.15)(20,5)
\psline[linecolor=black,linestyle=dashed,linewidth=0.01in]{c-c}(40,10.85)(40,55)
\psline[linecolor=black,linestyle=dashed,linewidth=0.01in]{c-c}(49.15,40)(5,40)
\psline[linecolor=darkgray,linestyle=solid,linewidth=0.02in]{c-c}(0,0)(0,60)
\psline[linecolor=darkgray,linestyle=solid,linewidth=0.02in]{c-c}(30,0)(30,60)
\psline[linecolor=darkgray,linestyle=solid,linewidth=0.02in]{c-c}(60,0)(60,60)
\psline[linecolor=darkgray,linestyle=solid,linewidth=0.02in]{c-c}(0,0)(60,0)
\psline[linecolor=darkgray,linestyle=solid,linewidth=0.02in]{c-c}(0,30)(60,30)
\psline[linecolor=darkgray,linestyle=solid,linewidth=0.02in]{c-c}(0,60)(60,60)
\pscircle*(10,20){0.04in}
\pscircle*(20,50){0.04in}
\pscircle*(40,10){0.04in}
\pscircle*(50,40){0.04in}
\uput[270](10,20){$2$}
\uput[180](20,50){$4$}
\uput[0](40,10){$1$}
\uput[90](50,40){$3$}
\end{pspicture}
\end{center}
\caption[The permutation $2413$ cannot be drawn on an $\textsf{X}$.]{The permutation $2413$ lies in $\Grid\fnmatrix{rr}{-1&1\\1&-1}$ but not $\Geom\fnmatrix{rr}{-1&1\\1&-1}$.  If $2413$ did lie in this geometric grid class, then beginning with the $2$ and moving in a clockwise direction, we see that the $4$ lies to the right of the $2$ and thus closer to the centre, the $3$ lies closer than the $4$ to the centre, the $1$ lies closer than the $3$ to the centre, and finally, to reach a contradiction, the $2$ must lie even closer to the centre than the $1$.}
\label{fig-2413-not-in-X}
\end{figure}

Because of Theorem~\ref{thm-forests-are-geoms}, all of our results about geometric grid classes yield immediate corollaries to grid classes of forests, which we shall generally not mention.  For example, our upcoming Theorem~\ref{thm-geom-griddable-pwo} generalises one direction of Theorem~\ref{thm-forest-grids-pwo}.

The fact that $\Grid(M)$ is not pwo when $M$ is not a forest (the other direction of Theorem~\ref{thm-forest-grids-pwo}), combined with Theorem~\ref{thm-geom-griddable-pwo} which shows that all geometric grid classes are pwo, implies that the converse to Theorem~\ref{thm-forests-are-geoms} also holds: if $\Grid(M)=\Geom(M)$ then $M$ is a forest.  This fact can also be established by arguments generalising those accompanying Figure~\ref{fig-2413-not-in-X}.

\section{Partial Multiplication Matrices}
\label{sec-grid-pmm}

In this section we consider a particular ``refinement'' operation on matrices, which is central to our later arguments.  Let $M$ be a $\zpm$ matrix of size $t\times u$, and $q$ a positive integer.  The \emph{refinement} $M^{\times q}$ of $M$ is the $\zpm$ matrix of size $qt\times qu$ obtained from $M$ by replacing each $1$ by a $q\times q$ identity matrix (which, by our conventions, has ones along its southwest-northeast diagonal), each $-1$ by a negative $q\times q$ anti-identity matrix, and each $0$ by a $q\times q$ zero matrix.  It is easy to see that the standard figure $M^{\times q}$ is equivalent to the standard figure of $M$, so $\Geom(M^{\times q})=\Geom(M)$ for all $q$ (although, of course, the corresponding gridded classes differ).  

The refinements $M^{\times 2}$ play a special role throughout this paper.  To explain this we first need a definition.  We say that a $\zpm$ matrix $M$ of size $t\times u$ is a \emph{partial multiplication matrix} if there are \emph{column and row signs}
$$
c_1,\ldots,c_t,r_1,\ldots,r_u\in \{1,-1\}
$$
such that every entry $M_{k,\ell}$ is equal to either $0$ or the product $c_kr_\ell$.

%
%

As our next result shows, we are never far from a partial multiplication matrix.

\begin{proposition}
For every $\zpm$ matrix $M$, its refinement $M^{\times 2}$ is a partial multiplication matrix.
\end{proposition}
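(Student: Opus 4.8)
The plan is to exhibit explicit column and row signs for $M^{\times 2}$ and then verify directly that every entry is either $0$ or the product of its column sign and its row sign; no induction and no hypothesis on $M$ beyond being a $\zpm$ matrix is needed.

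First I would fix coordinates. Since $M^{\times 2}$ has size $2t\times 2u$, I decompose each column index $K\in\{1,\dots,2t\}$ as $K=2(k-1)+a$ with $k\in\{1,\dots,t\}$ and $a\in\{1,2\}$, and each row index $L\in\{1,\dots,2u\}$ as $L=2(\ell-1)+b$ with $\ell\in\{1,\dots,u\}$ and $b\in\{1,2\}$. By the definition of the refinement, the $(K,L)$ entry of $M^{\times 2}$ is the $(a,b)$ entry of the $2\times 2$ block that replaced $M_{k,\ell}$. Unwinding the three cases: it is $0$ whenever $M_{k,\ell}=0$; when $M_{k,\ell}=1$ the block is the $2\times 2$ identity, which (with the paper's convention that the identity carries its ones on the southwest--northeast diagonal, i.e.\ at local positions $(1,1)$ and $(2,2)$) contributes $1$ exactly when $a=b$ and $0$ otherwise; and when $M_{k,\ell}=-1$ the block is the negative anti-identity, which contributes $-1$ exactly when $a\neq b$ and $0$ otherwise.

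The key point is that although whether an entry of $M^{\times 2}$ vanishes depends on $M_{k,\ell}$, the \emph{sign} of any nonzero entry depends only on the pair $(a,b)$ --- it is $+1$ when $a=b$ and $-1$ when $a\neq b$. So I would define the signs purely by parity of the index: $c_K=1$ if $K$ is odd (equivalently $a=1$) and $c_K=-1$ if $K$ is even ($a=2$), and similarly $r_L=1$ or $-1$ according as $L$ is odd or even. Then $c_K r_L=1$ precisely when $a=b$ and $c_K r_L=-1$ precisely when $a\neq b$. Comparing with the three cases above, in every case the $(K,L)$ entry of $M^{\times 2}$ is either $0$ or $c_K r_L$ (using that $0$ is always a permitted value), which is exactly the definition of a partial multiplication matrix with column signs $c_1,\dots,c_{2t}$ and row signs $r_1,\dots,r_{2u}$.

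The only thing requiring care --- more bookkeeping than obstacle --- is aligning the paper's ``columns first, then rows from the bottom'' indexing with the ``southwest--northeast diagonal'' description of the refined identity and anti-identity blocks, so that one is certain the identity blocks occupy the $a=b$ positions and the negative anti-identity blocks the $a\neq b$ positions. Once that is pinned down, the proof is just the two-line case analysis over $M_{k,\ell}\in\{0,1,-1\}$ described above.
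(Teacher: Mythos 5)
Your proof is correct and is essentially the paper's argument: both identify that every nonzero entry of $M^{\times 2}$ has sign determined solely by the parity of its (global) column-plus-row index and then read off parity-based column and row signs. Your choice $c_K=(-1)^{K+1}$, $r_L=(-1)^{L+1}$ is the global negation of the paper's $c_k=(-1)^k$, $r_\ell=(-1)^\ell$, but the products $c_Kr_L$ coincide, so the two verifications are identical in substance.
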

\begin{proof}
By construction, $M^{\times 2}$ is made up of $2\times 2$ blocks equal to
$$
\fnmatrix{rr}{0&0\\0&0},
\fnmatrix{rr}{0&1\\1&0},
\mbox{ and }
\fnmatrix{rr}{-1&0\\0&-1}.
$$
From this it follows that $(M^{\times 2})_{k,\ell}\in\{0,(-1)^{k+\ell}\}$.  Therefore we may take $c_k=(-1)^k$ and $r_\ell=(-1)^\ell$ as our column and row signs.
\end{proof}

Because $\Geom(M)=\Geom(M^{\times 2})$, we may always assume that the matrices we work with are partial multiplication matrices.  We record this useful fact below.

\begin{proposition}
\label{prop-geom-pmm}
Every geometric grid class is the geometric grid class of a partial multiplication matrix.
\end{proposition}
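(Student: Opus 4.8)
The plan is to chain together the two facts just established. By definition, any geometric grid class is $\Geom(M)$ for some $\zpm$ matrix $M$. First I would recall the observation made when refinements were introduced: the standard figure of $M^{\times q}$ is figure equivalent to the standard figure of $M$, and hence $\Geom(M^{\times q})=\Geom(M)$ for every positive integer $q$; in particular this holds for $q=2$.

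Next I would invoke the preceding proposition, which asserts that $M^{\times 2}$ is a partial multiplication matrix (with column signs $c_k=(-1)^k$ and row signs $r_\ell=(-1)^\ell$). Combining these two statements gives
$$
\Geom(M)=\Geom(M^{\times 2}),
$$
exhibiting the given geometric grid class as the geometric grid class of the partial multiplication matrix $M^{\times 2}$, which is exactly the claim.

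There is really no obstacle here: the result is a one-line corollary of the two immediately preceding results, and its purpose is simply to record, for convenient later reference, that we may assume without loss of generality that the defining matrix of any geometric grid class is a partial multiplication matrix. The only point worth stating explicitly is that it is the \emph{ungridded} classes that coincide — the associated gridded classes $\Geom^\gridded(M)$ and $\Geom^\gridded(M^{\times 2})$ are genuinely different — but since the proposition speaks only of $\Geom$, this causes no difficulty.
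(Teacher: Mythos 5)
Your proof is correct and is exactly the argument the paper intends: the proposition is recorded as an immediate consequence of the identity $\Geom(M)=\Geom(M^{\times 2})$ and the preceding proposition that $M^{\times 2}$ is a partial multiplication matrix. Your remark distinguishing the ungridded classes (which coincide) from the gridded ones (which do not) matches the paper's own parenthetical caveat.
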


\section{Words and Encodings}
\label{sec-words-and-encodings}

From the point of view of our goals in this paper, subword-closed languages over a finite alphabet display model behaviour: all such languages are defined by finite sets of forbidden subwords, are pwo under the subword order, and have rational generating functions.  The brunt of our subsequent effort is focused on transferring these favourable properties from words to permutations.

Let $\Sigma$ be a finite alphabet, and $\Sigma^\ast$ the set of all finite words (i.e., sequences) over $\Sigma$. This set is partially ordered by means of the \emph{subword} or \emph{subsequence} order: $v\le w$ if one can obtain $v$ from $w$ by deleting letters.

Subsets of $\Sigma^\ast$ are called \emph{languages}.  We say that a language is \emph{subword-closed} if it is a downward closed set in the subword order (such languages are also called \emph{piecewise testable} by some, for example, Simon~\cite{simon:piecewise-testa:}).  To borrow terminology from permutation classes, we say that the \emph{basis} of a subword-closed language $L$ is the set of minimal words which do not lie in $L$.  It follows that $L$ consists of precisely those words which do not contain any element of its basis.  Moreover, a special case of a result of Higman~\cite{higman:ordering-by-div:} implies that subword-closed languages have finite bases:

\newtheorem*{higmans-theorem}{\rm\bf Higman's Theorem}
\begin{higmans-theorem}
\emph{The set of words over any finite alphabet is pwo under the subword order.}
\end{higmans-theorem}

The following characterisation of subword-closed languages is folkloric, and follows directly from the fact that, for any finite set of forbidden subwords, there exists a finite state automaton accepting words not containing these subwords, which is acyclic except for loops at individual states.

\begin{proposition}\label{prop-subword-closed-decomp}
Let $\Sigma$ be a finite alphabet.  Every non-empty subword-closed language over $\Sigma$ can be expressed as a finite union of languages of the form
$$
\Sigma_1^\ast \{\emptyword, a_2\} \Sigma_3^\ast\{\emptyword, a_4\} \ldots \Sigma_{2q}^\ast \{\emptyword, a_{2q}\} \Sigma_{2q+1}^\ast
$$
where $q\geq 0$, $a_2,\ldots ,a_{2q}\in \Sigma$ and $\Sigma_1,\ldots,\Sigma_{2q+1}\subseteq \Sigma$.
\end{proposition}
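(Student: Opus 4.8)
The plan is to build, for a fixed finite basis $B$ of forbidden subwords, a finite automaton $\mathcal{A}_B$ that accepts exactly those words in $\Sigma^\ast$ avoiding every word in $B$, and then to decompose the set of accepting paths of $\mathcal{A}_B$ into finitely many ``simple-path skeletons'' that each contribute one language of the stated shape. First I would recall that a word $w$ contains a fixed word $b = b_1 b_2 \cdots b_m$ as a subword iff the greedy left-to-right scan of $w$ successfully matches all of $b_1, \dots, b_m$; this scan is implemented by a deterministic automaton on state set $\{0, 1, \dots, m\}$ (the state records how long a prefix of $b$ has been matched so far), with a loop at every state on letters that do not advance the match and a forward edge on the letter that does. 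Crucially, apart from the loops, every transition in this automaton strictly increases the state, so the only cycles are self-loops. Taking the product of these automata over all $b \in B$ and restricting to the non-absorbing states, one obtains a finite automaton $\mathcal{A}_B$ recognising the avoidance language, whose transition graph is acyclic except for self-loops at individual states — this is exactly the ``folkloric'' fact quoted before the proposition, and I would either cite it or spell out this product construction.

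The heart of the argument is then purely graph-theoretic. In $\mathcal{A}_B$, since the only cycles are self-loops, every accepting run decomposes uniquely as: stay at some state $s_1$ (reading a word over the loop-alphabet $\Sigma_1$ of $s_1$), take a loop-free forward edge reading a letter $a_2$, stay at $s_3$ (reading over $\Sigma_3$), take an edge reading $a_4$, and so on, ending with a sojourn at an accepting state $s_{2q+1}$ over $\Sigma_{2q+1}$. The sequence of states visited by the loop-free edges, $s_1, s_3, \dots, s_{2q+1}$, is a \emph{simple} directed path in the loop-deleted graph $\mathcal{A}_B'$ (no state repeats, precisely because deleting self-loops leaves an acyclic graph). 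There are only finitely many simple directed paths from an initial state to an accepting state in a finite graph. For each such simple path $P = (s_1, \dots, s_{2q+1})$ with connecting letters $a_2, a_4, \dots, a_{2q}$, the set of words whose accepting run follows exactly $P$ is
$$
\Sigma_1^\ast \{\emptyword, a_2\} \Sigma_3^\ast \{\emptyword, a_4\} \cdots \Sigma_{2q}^\ast \{\emptyword, a_{2q}\} \Sigma_{2q+1}^\ast,
$$
where $\Sigma_{2i-1}$ is the self-loop alphabet of state $s_{2i-1}$; the $\{\emptyword, a_{2j}\}$ factors arise because an accepting run need not traverse the entire simple path — it may halt early at any accepting state along it, so one records each connecting letter as ``optional.'' Taking the union over all finitely many simple paths $P$ recovers the whole avoidance language, which is the claimed finite union. (The case $q = 0$ is a single state that is both initial and accepting, giving $\Sigma_1^\ast$.)

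**The main obstacle** is handling the optionality of the connecting letters cleanly: a word avoiding $B$ need not use all the forward edges of a maximal simple path, so one must be careful that halting at an intermediate accepting state is captured by the $\{\emptyword, a_{2j}\}$ alternations rather than by introducing a separate simple path for every prefix. The tidy way around this is to enumerate simple paths that \emph{end} at an accepting state and, for each, allow every internal connecting letter to be skipped — but then, if a prefix $s_1, \dots, s_{2i-1}$ of the path already ends at an accepting state, the corresponding words get counted both there and in the longer expression; since we are forming a union this overcounting is harmless, so no further bookkeeping is needed. A secondary, minor point is the empty word and the empty language: the empty language is excluded by hypothesis (``non-empty''), and the empty word is handled by the $q=0$ term $\Sigma_1^\ast$ with $\Sigma_1 = \emptyset$ when the initial state is accepting with no usable loops.
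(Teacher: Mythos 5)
Your overall strategy is exactly the one the paper intends: the paper gives no proof beyond the remark that the avoidance automaton for a finite set of forbidden subwords is acyclic except for self-loops, and your product-of-greedy-matchers construction, followed by an enumeration of the finitely many simple paths in the loop-deleted DAG, is the standard way to flesh that remark out. (You also correctly rely on Higman's Theorem, quoted just before the proposition, to obtain a finite basis $B$ in the first place.)

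There is, however, one step that is false as stated, though easily repaired. You claim that for a fixed labelled simple path $P=(s_1,\dots,s_{2q+1})$, the set of words whose accepting run follows $P$ (allowing early halts) equals $\Sigma_1^\ast\{\emptyword,a_2\}\Sigma_3^\ast\cdots\{\emptyword,a_{2q}\}\Sigma_{2q+1}^\ast$. It does not: that language also contains words such as $u_1u_3$ with $u_1\in\Sigma_1^\ast$ and $u_3\in\Sigma_3^\ast$, obtained by omitting $a_2$ but still reading letters from the loop alphabet of $s_3$, a state the run never reaches once $a_2$ is skipped; such a word may leave $s_1$ along an entirely different forward edge, so it is not accounted for by ``halting early,'' and a priori it is not obvious it is accepted at all. (Concretely, for $B=\{aa,bb\}$ the path $(0,0)\to(1,0)\to(1,1)$ yields the language $\{\emptyword,a\}\{\emptyword,b\}$, which contains the word $b$, whose run instead passes through $(0,1)$.) What you actually need is the pair of containments: (i) every $w$ in the avoidance language $L$ follows some labelled simple path $P$ and hence lies in $\Sigma_1^\ast a_2\Sigma_3^\ast\cdots a_{2q}\Sigma_{2q+1}^\ast$, a fortiori in the optional-letter language for $P$; and (ii) every word of the optional-letter language for $P$ is a subword of some word of $\Sigma_1^\ast a_2\cdots a_{2q}\Sigma_{2q+1}^\ast\subseteq L$, hence lies in $L$ because $L$ is subword-closed. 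Step (ii) is where the hypothesis of subword-closedness must actually be invoked; your write-up never uses it at this point, which is the tell-tale sign of the gap. With (i) and (ii) in place, the union over the finitely many labelled simple paths is exactly $L$ and your argument is complete.
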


This fact shows that subword-closed languages are \emph{regular languages}.  To recall their definition briefly, given a finite alphabet $\Sigma$, the empty language $\emptyset$, the empty word language $\{\emptyword\}$, and the singleton languages $\{a\}$ for each $a\in\Sigma$ are regular; moreover, given two regular languages $K$ and $L$ over $\Sigma$, their union $K\cup L$, their concatenation $KL=\{vw\st v\in K\mbox{ and }w\in L\}$, and the star $L^\ast=\{v^{(1)}\cdots v^{(m)}\st m\ge 0\mbox{ and }v^{(1)},\dots,v^{(m)}\in L\}$ are also regular.  Alternatively, one may define regular languages as those accepted by a deterministic finite state automaton.  From this second viewpoint, it is easy to see that given two regular languages $K$ and $L$, their complement $K\setminus L$ is also regular, a property we use many times.  We say that the \emph{generating function} of the language $L$ is $\sum x^{|w|}$ where the sum is taken over all $w\in L$ and $|w|$ denotes the number of letters in $w$, i.e., its \emph{length}.  We use only the most basic properties of regular languages, for which we refer the reader to Flajolet and Sedgewick~\cite[Section I.4 and Appendix A.7]{flajolet:analytic-combin:}.  In particular, the following fact is of central importance throughout the paper.

\begin{theorem}\label{thm-reg-lang-rat-gf}
Every regular language has a rational generating function.
\end{theorem}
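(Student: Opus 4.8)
The plan is to pass from the regular-expression description of $L$ to a deterministic finite automaton and then extract the generating function by a transfer-matrix (linear algebra) computation. Since the text records that the regular languages over $\Sigma$ are exactly those accepted by a deterministic finite state automaton, I would begin by fixing such an automaton $\mathcal{A}$ for the given regular language $L\subseteq\Sigma^\ast$, with state set $Q=\{q_1,\dots,q_m\}$, start state $q_1$, set of accepting states $F\subseteq Q$, and transition function $\delta\colon Q\times\Sigma\to Q$. The feature of $\mathcal{A}$ that makes everything work is determinism: each word $w\in\Sigma^\ast$ labels exactly one run of $\mathcal{A}$ starting at $q_1$, and $w\in L$ precisely when that run ends in $F$. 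This is what prevents the overcounting that would arise for a nondeterministic automaton.

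First I would introduce the integer matrix $A\in\mathbb{Z}^{m\times m}$ given by $A_{i,j}=|\{a\in\Sigma\st\delta(q_i,a)=q_j\}|$. A one-line induction on $n$ shows that $(A^n)_{i,j}$ counts the words of length $n$ whose run from $q_i$ ends at $q_j$. Combining this with the remark on determinism yields
$$
|L_n|=\sum_{q_j\in F}(A^n)_{1,j}=\mathbf{e}_1^{\mathsf{T}}A^n\mathbf{f},
$$
where $\mathbf{e}_1$ is the first standard basis vector and $\mathbf{f}\in\{0,1\}^m$ is the indicator vector of $F$.

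Next I would sum over $n$. In the ring of formal power series in $x$ with coefficients in $\mathbb{Q}^{m\times m}$ one has $\sum_{n\ge 0}A^nx^n=(I-xA)^{-1}$, which needs no convergence argument: it is the formal identity $(I-xA)\bigl(\sum_{n\ge 0}A^nx^n\bigr)=I$. The matrix $I-xA$ is invertible over the field $\mathbb{Q}(x)$ because $\det(I-xA)$ is a polynomial in $x$ with constant term $1$, hence nonzero. Therefore
$$
\sum_{n\ge 0}|L_n|x^n=\mathbf{e}_1^{\mathsf{T}}(I-xA)^{-1}\mathbf{f},
$$
and by Cramer's rule each entry of $(I-xA)^{-1}$ equals a cofactor of $I-xA$ divided by $\det(I-xA)$, so it is a rational function of $x$; a finite linear combination of rational functions is again rational, which is the assertion.

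I do not expect a serious obstacle, as the result is classical. The one point that must genuinely be respected is the passage to a \emph{deterministic} automaton: over an NFA the matrix $A$ would count accepting runs rather than accepted words, and the two can differ. This is supplied for free by the equivalence, noted in the text, between the regular-expression and DFA descriptions of regular languages. A purely structural induction on regular expressions is also possible but messier, since the union of two languages need not be disjoint and concatenation and star can be ambiguous, so the generating-function bookkeeping there requires extra care that the automaton viewpoint sidesteps.
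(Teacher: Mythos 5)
Your proof is correct and complete. The paper does not actually prove this theorem --- it is stated as a classical fact and outsourced to Flajolet and Sedgewick --- but the deterministic-automaton transfer-matrix argument you give is precisely the standard proof found in that reference, and your emphasis on determinism (so that $A^n$ counts accepted words rather than accepting runs) is exactly the point that needs to be respected.
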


We now describe the correspondence between permutations in a geometric grid class, $\Geom(M)$, and words over an appropriate finite alphabet.  This encoding, essentially introduced by Vatter and Waton~\cite{vatter:on-partial-well:}, is central to all of our proofs.

By Proposition~\ref{prop-geom-pmm}, we may assume that $M$ is a $t\times u$ partial multiplication matrix with column and row signs $c_1,\dots,c_t$ and $r_1,\dots,r_u$.  Let $\Lambda^\gridded$ denote the standard gridded figure of $M$, and define the \emph{cell alphabet} of $M$ to be
$$
\Sigma=\{\cell{k}{\ell} \st M_{k,\ell}\neq 0\}.
$$
Intuitively, the letter $\cell{k}{\ell}$ represents an instruction to place a point in an appropriate position on the line in the $(k,\ell)$ cell of $\Lambda^\gridded$.  This appropriate position is determined as follows, and the whole process is depicted in Figure~\ref{fig-vatter-waton-map-example}.

\begin{figure}
\begin{center}
\psset{xunit=0.017in, yunit=0.017in}
\psset{linewidth=0.005in}
\begin{pspicture}(-3,-3)(120,80)
\psline[linecolor=black,linestyle=solid,linewidth=0.02in](0,0)(40,40)
\psline[linecolor=black,linestyle=solid,linewidth=0.02in](40,40)(80,0)
\psline[linecolor=black,linestyle=solid,linewidth=0.02in](40,40)(80,80)
\psline[linecolor=black,linestyle=solid,linewidth=0.02in](80,40)(120,0)
\psline[linecolor=black,linestyle=solid,linewidth=0.02in](80,40)(120,80)
\psline[linecolor=darkgray,linestyle=solid,linewidth=0.02in]{c-c}(0,0)(0,80)
\psline[linecolor=darkgray,linestyle=solid,linewidth=0.02in]{c-c}(40,0)(40,80)
\psline[linecolor=darkgray,linestyle=solid,linewidth=0.02in]{c-c}(80,0)(80,80)
\psline[linecolor=darkgray,linestyle=solid,linewidth=0.02in]{c-c}(120,0)(120,80)
\psline[linecolor=darkgray,linestyle=solid,linewidth=0.02in]{c-c}(0,0)(120,0)
\psline[linecolor=darkgray,linestyle=solid,linewidth=0.02in]{c-c}(0,40)(120,40)
\psline[linecolor=darkgray,linestyle=solid,linewidth=0.02in]{c-c}(0,80)(120,80)
\pscircle*(85,35){0.04in}
\uput[10](85,35){$p_1$}
\pscircle*(90,30){0.04in}
\uput[270](90,30){$p_2$}
\pscircle*(55,55){0.04in}
\uput[-45](55,55){$p_3$}
\pscircle*(60,20){0.04in}
\uput[45](60,20){$p_4$}
\pscircle*(15,15){0.04in}
\uput[-45](15,15){$p_5$}
\pscircle*(110,70){0.04in}
\uput[-60](110,70){$p_6$}
\pscircle*(75,75){0.04in}
\uput[275](75,75){$p_7$}
\psline[linecolor=black,linestyle=solid,linewidth=0.01in,arrowsize=0.05in]{<-c}(-3,1)(-3,39)
\psline[linecolor=black,linestyle=solid,linewidth=0.01in,arrowsize=0.05in]{c->}(-3,41)(-3,79)
\psline[linecolor=black,linestyle=solid,linewidth=0.01in,arrowsize=0.05in]{<-c}(1,-3)(39,-3)
\psline[linecolor=black,linestyle=solid,linewidth=0.01in,arrowsize=0.05in]{c->}(41,-3)(79,-3)
\psline[linecolor=black,linestyle=solid,linewidth=0.01in,arrowsize=0.05in]{c->}(81,-3)(119,-3)
\end{pspicture}
\end{center}
\caption[An example of the map $\bij$.]{An example of the map $\bij$ for the matrix $\fnmatrix{rrr}{0&1&1\\1&-1&-1}$ with row signs $r_1=-1$ and $r_2=1$ and column signs $c_1=-1$, $c_2=c_3=1$.  Here we see that $\bij(\cell{3}{1}\cell{3}{1}\cell{2}{2}\cell{2}{1}\cell{1}{1}\cell{3}{2}\cell{2}{2})=1527436$.}
\label{fig-vatter-waton-map-example}
\end{figure}

We say that the \emph{base line} of a column of $\Lambda^\gridded$ is the grid line to the left (resp., right) of that column if the corresponding column sign is $1$ (resp., $-1$).  Similarly, the base line of a row of $\Lambda^\gridded$ is the grid line below (resp., above) that row if the corresponding row sign is $1$ (resp., $-1$).  We designate the intersection of the two base lines of a cell as its \emph{base point}.  Note that the base point is an endpoint of the line segment of $\Lambda$ lying in this cell.  As this definition indicates, we interpret the column and row signs as specifying the direction in which the columns and rows are ``read''.  Owing to this interpretation, we represent the column and row signs in our figures by arrows, as shown in Figure~\ref{fig-vatter-waton-map-example}.

To every word $w=w_1\cdots w_n\in\Sigma^\ast$ we associate a permutation $\bij(w)$ as follows.  First we choose arbitrary distances
$$
0<d_1<\cdots<d_n<1.
$$
Next, for each $i$, we let $p_i$ be the point on the line segment in cell $C_{k,\ell}$, where $w_i=a_{k,\ell}$, at infinity-norm distance $d_i$ from the base point of $C_{k,\ell}$.  It follows from our choice of distances $d_1,\dots,d_n$ that $p_1,\dots,p_n$ are independent, and we define $\bij(w)$ to be the permutation which is equivalent to the set $\{p_1,\dots,p_n\}$ of points.

It is a routine exercise to show that $\bij(w)$ does not depend on the particular choice of distances $d_1,\dots,d_n$, showing that the mapping $\bij\st\Sigma^\ast\to \Geom(M)$ is well-defined.  Of course there is a gridded counterpart $\bij^\gridded\st \Sigma^\ast \to \Geom^\gridded(M)$, whereby we retain the grid lines coming from the figure $\Lambda^\gridded$.

The basic properties of $\bij$ and $\bij^\gridded$ are described by the following result.

\begin{proposition}
\label{prop-properties-of-bij}
The mappings $\bij$ and $\bij^\gridded$ are length-preserving, finite-to-one, onto, and order-preserving.
\end{proposition}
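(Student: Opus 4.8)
The plan is to verify each of the four properties directly from the construction of $\bij$ (and its gridded companion $\bij^\gridded$), working cell by cell. Fix a $t\times u$ partial multiplication matrix $M$ with cell alphabet $\Sigma$ and standard gridded figure $\Lambda^\gridded$. For a word $w=w_1\cdots w_n$ and chosen distances $0<d_1<\cdots<d_n<1$, let $p_1,\dots,p_n$ be the points produced by the construction; $\bij(w)$ is the permutation equivalent to $\{p_1,\dots,p_n\}$ and $\bij^\gridded(w)$ the gridded permutation equivalent to $(\{p_1,\dots,p_n\},G)$. Since the grid lines $G$ are fixed throughout and $\delta\bij^\gridded=\bij$, it suffices in each case to treat the gridded version; the ungridded statement then follows by applying $\delta$ (which is length-preserving and does not create or destroy points).

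\emph{Length-preserving and onto} are the easiest. Length-preserving is immediate: $w$ has $n$ letters, the construction produces exactly $n$ points $p_1,\dots,p_n$, and these are independent (by the strict inequalities among the $d_i$ together with the fact that each $p_i$ lies on a line of slope $\pm 1$, so distinct infinity-norm distances from a common base point give distinct $x$- and $y$-coordinates along one segment; points in different cells are separated by grid lines). Hence $|\bij^\gridded(w)|=n$. Onto: given any gridded permutation $\sigma^\gridded\in\Geom^\gridded(M)$, realise it by a finite independent point set $S\subseteq\Lambda$ compatible with the grid $G$; order the points of $S$ as $q_1,\dots,q_n$ by increasing infinity-norm distance from their respective cell base points (break ties — which can only occur between points in different cells — arbitrarily), read off the cell of each $q_i$ to get a letter $w_i\in\Sigma$, and set $w=w_1\cdots w_n$. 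Choosing the $d_i$ to match the distances of the $q_i$ (rescaling into $(0,1)$ if necessary, which does not change the equivalence class) shows $\bij^\gridded(w)=\sigma^\gridded$.

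\emph{Order-preserving} means: if $v\le w$ in the subword order, then $\bij^\gridded(v)\le\bij^\gridded(w)$ in the gridded containment order. This follows because deleting letters from $w$ to obtain $v$ corresponds precisely to deleting the associated points $p_i$ from the configuration: the surviving points still lie on $\Lambda$, still respect the grid, and their relative order (horizontal and vertical, and cell membership) is inherited from the larger configuration, so they witness $\bij^\gridded(v)\le\bij^\gridded(w)$. One should note this is a genuine implication, not an equivalence — $\bij$ need not reflect the order.

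\emph{Finite-to-one} is the step I expect to be the main obstacle, since it is the only one that genuinely uses the geometry of slope-$\pm1$ segments rather than just bookkeeping. The claim is that each gridded permutation has only finitely many preimages under $\bij^\gridded$; equivalently, that two words mapping to the same gridded permutation can differ only in ways constrained by the cell structure. The key observation is that within a single cell, the order in which its letters appear in $w$ is \emph{forced} by the permutation: reading along the slope-$\pm1$ segment, the $i$-th point (by distance from the base point) is simultaneously the $i$-th in horizontal order and the $i$-th in vertical order among points of that cell, so the positions of the letters $a_{k,\ell}$ in $w$, read left to right, must list the corresponding entries of the permutation in increasing order of value (up to the direction dictated by the signs). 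Thus a word $w$ with $\bij^\gridded(w)=\sigma^\gridded$ is determined by: (i) which cell each of the $n$ points occupies — but the number of points in cell $(k,\ell)$ is determined by $\sigma^\gridded$, being the number of entries in the corresponding block — and (ii) the interleaving of the letter-occurrences of different cells, i.e. a way of merging finitely many fixed sequences; and the interleaving is constrained (though not always uniquely determined) by the cross-cell comparisons recorded in $\sigma^\gridded$. In all cases there are at most finitely many — crudely, at most $n!$, and more precisely at most the number of ways to interleave the per-cell blocks — words of length $n$ mapping to a given $\sigma^\gridded$ of length $n$, and since $\bij^\gridded$ is length-preserving no word of any other length can map to it. This gives the bound. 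The careful part is arguing (i): that the count of points in each cell really is an invariant of $\sigma^\gridded$, which is where compatibility with $M$ and the fixed grid $G$ are used.
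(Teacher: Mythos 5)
Your proof is correct and follows essentially the same route as the paper's for length-preservation, surjectivity, and order-preservation (realise the permutation by points on $\Lambda$, sort by distance from the base points, read off cells; deletion of letters corresponds to deletion of points). The one place you diverge is finite-to-one, which you single out as ``the main obstacle'' and attack with a geometric interleaving argument; in fact it is the most trivial of the four properties and needs none of that geometry, since over the finite alphabet $\Sigma$ there are only $|\Sigma|^n$ words of length $n$, so length-preservation alone already bounds the number of preimages of any permutation of length $n$ by $|\Sigma|^n$ --- which is exactly the one-line observation the paper makes.
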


\begin{proof}
That $\bij$ is length-preserving is obvious as it maps letters in a word to entries in a permutation, and that it is finite-to-one follows immediately from this.

In order to prove that $\bij$ is onto, let $\pi\in\Geom(M)$ and choose a finite set $P=\{p_1,\dots,p_n\}\subseteq\Lambda$ of points which represent $\pi$ (where as usual $\Lambda$ denotes the standard figure of $M$).  Suppose that the point $p_i$ belongs to the cell $(k_i,\ell_i)$ of $\Lambda^\gridded$, and let $d_i$ denote the infinity-norm distance from $p_i$ to the base point of this cell.  The points in $P$ are independent, because they are equivalent to a permutation.  Therefore, we may move the points of $P$ independently by small amounts without affecting its (figure) equivalence class, and thus may assume that the distances $d_i$ are distinct.  By reordering the points if necessary, we may also assume that $d_1<\cdots<d_n$.  It is then clear that $\bij(\cell{k_1}{\ell_1}\cdots\cell{k_n}{\ell_n})=\pi$, so $\bij$ is indeed onto.

It remains to show that $\bij$ is order-preserving.  Suppose that $v=v_1\cdots v_k,w=w_1\cdots w_n\in\Sigma^\ast$ satisfy $v\le w$.  Thus there are indices $1\le i_1<\cdots<i_k\le n$ such that $v=w_{i_1}\cdots w_{i_k}$.  Note that if $\bij(w)$ is represented by the point set $\{p_1,\dots,p_n\}$ via the sequence of distances $d_1<\cdots<d_n$, then $\bij(v)$ is represented by the point set $\{p_{i_1},\dots,p_{i_k}\}$ via the sequence of distances $d_{i_1}<\cdots<d_{i_k}$, so $\bij(v)\le\bij(w)$.

The proofs for the gridded version $\bij^\gridded$ are analogous.
\end{proof}

Using this correspondence between words and permutations, one may give an alternative proof of Theorem~\ref{thm-forests-are-geoms}, showing that $\Grid^\gridded(M)=\Geom^\gridded(M)$, and thus that $\Grid(M)=\Geom(M)$ when $M$ is a forest.  As Proposition~\ref{prop-properties-of-bij} shows that $\bij^\gridded$ maps onto $\Geom^\gridded(M)$, one only need to show that it also maps onto $\Grid^\gridded(M)$ when $M$ is a forest.  This is proved directly in Vatter and Waton~\cite{vatter:on-partial-well:}.

\section{Partial Well Order and Finite Bases}
\label{sec-geom-fin-basis}

We now use the encoding $\bij:\Sigma^\ast\rightarrow\Geom(M)$ from the previous section to establish structural properties of geometrically griddable classes, i.e., subclasses of geometric grid classes.  We begin with partial well order.  By Theorem~\ref{thm-forests-are-geoms}, this generalises one direction of Murphy and Vatter's Theorem~\ref{thm-forest-grids-pwo}.

\begin{theorem}
\label{thm-geom-griddable-pwo}
Every geometrically griddable class is partially well ordered.
\end{theorem}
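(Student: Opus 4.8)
The plan is to transfer partial well order from words to permutations using the encoding $\bij$ of the previous section, and then to observe that being pwo is inherited by subclasses. Let $\C$ be a geometrically griddable class, so $\C\subseteq\Geom(M)$ for some $\zpm$ matrix $M$. First I would invoke Proposition~\ref{prop-geom-pmm} to replace $M$ by a partial multiplication matrix with the same geometric grid class, so that the cell alphabet $\Sigma$ and the map $\bij\st\Sigma^\ast\to\Geom(M)$ from Section~\ref{sec-words-and-encodings} are available. Since a subset of a pwo poset is clearly pwo, it suffices to prove that $\Geom(M)$ itself is pwo, i.e.\ that it contains no infinite antichain.

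Suppose for contradiction that $\pi_1,\pi_2,\ldots$ is an infinite antichain in $\Geom(M)$; in particular these permutations are pairwise distinct and pairwise incomparable in the involvement order. By Proposition~\ref{prop-properties-of-bij}, $\bij$ is onto, so for each $i$ we may choose a word $w_i\in\Sigma^\ast$ with $\bij(w_i)=\pi_i$. Now apply Higman's Theorem: $\Sigma^\ast$ is pwo under the subword order, so from the infinite sequence $w_1,w_2,\ldots$ we can extract indices $i<j$ with $w_i\le w_j$. Because $\bij$ is order-preserving (again Proposition~\ref{prop-properties-of-bij}), this gives $\pi_i=\bij(w_i)\le\bij(w_j)=\pi_j$. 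But $\pi_i\neq\pi_j$, so $\pi_i$ and $\pi_j$ are comparable, contradicting the assumption that $\{\pi_1,\pi_2,\ldots\}$ is an antichain. Hence $\Geom(M)$, and therefore $\C$, is pwo.

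There is no serious obstacle here: the work has already been done in assembling the encoding $\bij$ and its properties, and the argument is the standard "pull back an antichain along an order-preserving surjection from a pwo poset" trick. The only points requiring a moment's care are (i) noting that we are entitled to assume $M$ is a partial multiplication matrix before the alphabet $\Sigma$ is even defined, and (ii) using that the $\pi_i$ in an antichain are distinct, so that $\pi_i\le\pi_j$ genuinely contradicts incomparability. Via Theorem~\ref{thm-forests-are-geoms}, this recovers one direction of Theorem~\ref{thm-forest-grids-pwo}.
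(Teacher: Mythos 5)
Your proof is correct and follows essentially the same route as the paper: reduce to a partial multiplication matrix via Proposition~\ref{prop-geom-pmm}, pull an antichain in $\Geom(M)$ back to words through the surjection $\bij$, and apply Higman's Theorem together with the order-preservation of $\bij$ from Proposition~\ref{prop-properties-of-bij}. The only cosmetic difference is that you argue by contradiction from an assumed infinite antichain, while the paper directly concludes that the set of chosen preimage words is a finite antichain; the content is identical.
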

\begin{proof}
Let $\C$ be a geometrically griddable class.  By Proposition~\ref{prop-geom-pmm}, $\C\subseteq\Geom(M)$ for some partial multiplication matrix $M$.  As partial well order is inherited by subclasses, it suffices to prove that $\Geom(M)$ is pwo.  Let $\bij\st\Sigma^\ast\rightarrow\Geom(M)$, where $\Sigma$ is the cell alphabet of $M$, be the encoding introduced in Section~\ref{sec-words-and-encodings}.  Take $A\subseteq\Geom(M)$ to be an antichain.  For every $\alpha\in A$ there is some $w_\alpha\in\Sigma^\ast$ such that $\bij(w_\alpha)=\alpha$ because $\bij$ is onto (Proposition~\ref{prop-properties-of-bij}).  The set $\{w_\alpha\st\alpha\in A\}$ must be an antichain in $\Sigma^\ast$ because $\bij$ is order-preserving (Proposition~\ref{prop-properties-of-bij} again).  Higman's Theorem therefore shows that this set, and thus also $A$, is finite, as desired.
\end{proof}

Our next goal is the following result.

\begin{theorem}
\label{thm-geom-griddable-fin-basis}
Every geometrically griddable class is finitely based.
\end{theorem}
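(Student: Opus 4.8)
The plan is to deduce finite basis-ness from partial well order together with the fact, already available via the word encoding, that membership in a geometrically griddable class can be ``certified locally.'' Let $\C\subseteq\Geom(M)$ be a geometrically griddable class, where by Proposition~\ref{prop-geom-pmm} we may take $M$ to be a partial multiplication matrix with cell alphabet $\Sigma$ and encoding $\bij\st\Sigma^\ast\to\Geom(M)$. The basis $B$ of $\C$ is an antichain of permutations. The first observation is that every $\beta\in B$, being minimal outside $\C$, has all of its proper subpermutations in $\C\subseteq\Geom(M)$; in particular $\beta$ with its largest entry deleted, and $\beta$ with its rightmost entry deleted, both lie in $\Geom(M)$.

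The key step is to bound the basis. Here the natural route is: the basis $B$ is itself an antichain in the permutation containment order, and I would like to say that $B$ is pwo and hence finite. Partial well order of $\Geom(M)$ (Theorem~\ref{thm-geom-griddable-pwo}) immediately gives that any antichain \emph{inside} $\Geom(M)$ is finite, but the trouble is that the basis elements of $\C$ need \emph{not} lie in $\Geom(M)$ — only their proper subpermutations do. So the plan is to split $B$ into two parts: $B_1 = B\cap\Geom(M)$ and $B_2 = B\setminus\Geom(M)$. The set $B_1$ is an antichain contained in the pwo class $\Geom(M)$, hence finite. For $B_2$, each $\beta\in B_2$ lies outside $\Geom(M)$, so $\beta$ contains some element of the basis of $\Geom(M)$; thus it suffices to know that $\Geom(M)$ itself is finitely based. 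But $\Geom(M)$ is a geometrically griddable class, so this is circular unless we prove the finite basis of $\Geom(M)$ directly. Therefore I would instead prove: \textbf{(a)} $\Geom(M)$ is finitely based, and \textbf{(b)} for a finitely based pwo class $\D$, every subclass $\C\subseteq\D$ is finitely based. Statement (b) is the standard fact that a pwo class has only finitely many subclasses up to... no — more simply, if $\D=\Av(F)$ with $F$ finite and $\C\subseteq\D$, then the basis of $\C$ consists of $F$-avoiders that are minimal-not-in-$\C$, hence is an antichain \emph{in $\D$}, hence finite by pwo of $\D$; that settles (b) cleanly. So the real content is (a).

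For (a), I would use the word encoding directly. Since $\bij\st\Sigma^\ast\to\Geom(M)$ is onto and order-preserving (Proposition~\ref{prop-properties-of-bij}), a permutation $\pi$ lies in $\Geom(M)$ iff $\pi=\bij(w)$ for some $w\in\Sigma^\ast$. Now suppose $\beta$ is a basis element of $\Geom(M)$ of length $n$; then every proper subpermutation of $\beta$ is in $\Geom(M)$, so in particular $\beta$ restricted to any $n-1$ of its entries equals $\bij(w)$ for a suitable word of length $n-1$. I expect the clean way to finish is a length bound: I claim there is a constant $N=N(M)$ such that every minimal non-member of $\Geom(M)$ has length at most $N$. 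To see this, argue by contradiction — an infinite basis would, by Higman's theorem applied after transferring to words, or by a direct pigeonhole on the $|\Sigma|$-bounded ``local data'' of large permutations, force a containment between two basis elements. Concretely: for a permutation $\pi$ of length $n$ all of whose proper patterns lie in $\Geom(M)$, one can assemble encodings of the $n$ one-point-deletions and, when $n$ is large relative to $|\Sigma|$ and the automaton of Proposition~\ref{prop-subword-closed-decomp}, splice two of them into a single consistent word encoding all of $\pi$, contradicting $\pi\notin\Geom(M)$. This pumping/splicing step is where the work lies.

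The main obstacle, therefore, is step (a): showing directly that $\Geom(M)$ is finitely based, i.e., that minimal non-members have bounded length. Partial well order alone does not give this (pwo classes can have infinite bases), so one genuinely needs the structural encoding — the finiteness of $\Sigma$ and the regular/automaton structure of $\Sigma^\ast$ and its subword-closed sublanguages (Proposition~\ref{prop-subword-closed-decomp}). I anticipate the argument takes the form: if $\pi$ is a long minimal non-member, reconstruct $\pi$ cell-by-cell from the encodings of its point-deletions, observe that the ``gridding/positional'' information attached to each deleted point takes only finitely many values, and use a pigeonhole on a long increasing or decreasing run within some cell to merge the reconstruction data into a single valid word $w$ with $\bij(w)=\pi$ — the contradiction. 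Once (a) and the easy inheritance lemma (b) are in hand, the theorem follows: $\C\subseteq\Geom(M)$, $\Geom(M)$ is finitely based and pwo, so $\C$ is finitely based.
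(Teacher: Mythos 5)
There is a genuine gap, and it is exactly where you say ``this is where the work lies.'' Your reduction is sound as far as it goes: part (b) (a subclass of a finitely based pwo class is finitely based) is correct, since a basis element of $\C\subseteq\D=\Av(F)$ either lies in $F$ or is an antichain element inside the pwo class $\D$. But this shifts the entire burden onto part (a), the claim that $\Geom(M)$ itself is finitely based, and for that you offer only a sketch of a pumping/splicing argument that is never carried out. The sketch has a real obstruction: the griddings of the various one-point deletions of a long minimal non-member $\pi$ need not be compatible with one another --- different deletions can force the shared entries into different cells --- so there is no evident mechanism for ``splicing two of them into a single consistent word encoding all of $\pi$.'' Moreover, your plan requires an explicit length bound $N(M)$ on minimal non-members; the paper's proof is deliberately non-constructive and its concluding section lists the computation of such a bound as an open problem, which is a strong hint that this route is substantially harder than the theorem itself.

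The paper's actual argument sidesteps (a) entirely. It observes that every basis element of $\C$ is a one-point extension of $\C$, and proves (Theorem~\ref{thm-geom-griddable-one-point-extension}) that $\C^{+1}$ is again geometrically griddable: given a drawing of $\pi\in\Geom(M)$ on the standard figure plus one added point $x$, one refines the gridding by cutting at the horizontal and vertical lines through $x$ (using the partial-multiplication structure to keep the refined figure equivalent to a standard figure, namely that of $M^{\times 3}$ with one extra row and column), so that $\tau$ lies in $\Geom(N)$ for one of finitely many $(3t+1)\times(3u+1)$ matrices $N$. The basis of $\C$ is then an antichain inside the geometrically griddable, hence pwo (Theorem~\ref{thm-geom-griddable-pwo}), class $\C^{+1}$, and is therefore finite. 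You had the right first observation --- that basis elements are one-point extensions --- but the missing idea is to prove griddability of the class of one-point extensions geometrically, rather than to bound the length of minimal non-members combinatorially.
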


We first make an elementary observation.

\begin{proposition}
\label{prop-geom-griddable-unions}
The union of a finite number of geometrically griddable classes is geometrically griddable.
\end{proposition}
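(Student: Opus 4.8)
The plan is to reduce at once to the union of two classes --- the general statement then follows by a trivial induction on the number of classes --- and to build, from matrices witnessing geometric griddability of the two pieces, a single matrix that works for the union. So suppose $\C_1$ and $\C_2$ are geometrically griddable, and fix $\zpm$ matrices $M_1$ and $M_2$, of sizes $t_1\times u_1$ and $t_2\times u_2$ respectively, with $\C_i\subseteq\Geom(M_i)$. I would form the $(t_1+t_2)\times(u_1+u_2)$ block matrix
$$
M \;=\; \left(\begin{array}{cc} M_1 & 0 \\ 0 & M_2 \end{array}\right),
$$
with $M_1$ occupying the first $t_1$ columns and first $u_1$ rows, $M_2$ the last $t_2$ columns and last $u_2$ rows, and zeros elsewhere, and claim that $\Geom(M)\supseteq\C_1\cup\C_2$.

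The key point is that the standard figure $\Lambda$ of $M$ literally \emph{contains} the standard figure $\Lambda_{M_1}$ of $M_1$ --- it is the part of $\Lambda$ lying in the cells $C_{k,\ell}$ with $k\le t_1$ and $\ell\le u_1$ --- and contains a translate of the standard figure $\Lambda_{M_2}$ of $M_2$, namely the part of $\Lambda$ in the cells $C_{k,\ell}$ with $k>t_1$ and $\ell>u_1$; the off-diagonal cells of $M$ are zero and so contribute no points. Since $\Sub$ is monotone with respect to inclusion (indeed involvement) of figures, and since a translate of a figure is figure-equivalent to it, this yields
$$
\Geom(M_i) \;=\; \Sub(\Lambda_{M_i}) \;\subseteq\; \Sub(\Lambda) \;=\; \Geom(M)
$$
for $i=1,2$. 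Combined with $\C_i\subseteq\Geom(M_i)$, this gives $\C_1\cup\C_2\subseteq\Geom(M)$, so the union is geometrically griddable, and the general case follows by taking one block per class in an analogous block-diagonal matrix.

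There is no real obstacle here: the argument is entirely a matter of unwinding the definitions via the $\Sub$-of-a-figure viewpoint developed in Section~\ref{sec-geometric-perspective}. The only points deserving a line of care are the bookkeeping that the zero off-diagonal blocks keep the two ``corner'' subfigures of $\Lambda$ disjoint and faithful copies of $\Lambda_{M_1}$ and (a translate of) $\Lambda_{M_2}$, and the elementary fact that $\Sub(\mathcal{F})\subseteq\Sub(\mathcal{G})$ whenever $\mathcal{F}\le\mathcal{G}$ --- immediate, since a finite independent subset of $\mathcal{F}$ is carried, by the increasing injections witnessing $\mathcal{F}\le\mathcal{G}$, to an equivalent finite independent subset of $\mathcal{G}$.
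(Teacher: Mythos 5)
Your proof is correct and follows essentially the same route as the paper, which simply observes that $\C\cup\D\subseteq\Geom(P)$ for any matrix $P$ containing copies of both witnessing matrices and iterates for larger unions. Your block-diagonal construction is one explicit choice of such a $P$, and your verification via the monotonicity of $\Sub$ under figure involvement fills in details the paper leaves implicit.
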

\begin{proof}
Let $\C$ and $\D$ be geometrically griddable classes.  Thus there are matrices $M$ and $N$ such that $\C\subseteq\Geom(M)$ and $\D\subseteq\Geom(N)$.  It follows that $\C\cup\D\subseteq\Geom(P)$ for any matrix $P$ which contains copies of both $M$ and $N$.  The result for arbitrary finite unions follows by iteration.
\end{proof}

Given any permutation class $\C$, we let $\C^{+1}$ denote the class of \emph{one-point extensions} of elements of $\C$, that is, $\C^{+1}$ is the set of all permutations $\pi$ which contain an entry whose removal yields a permutation in $\C$.  Every basis element of a class $\C$ is necessarily a one-point extension of $\C$, because the removal of \emph{any} entry of a basis element of $\C$ yields a permutation in $\C$.  Since bases of permutation classes are necessarily antichains, Theorem~\ref{thm-geom-griddable-fin-basis} will follow from the following result and Theorem~\ref{thm-geom-griddable-pwo}.

\begin{theorem}
\label{thm-geom-griddable-one-point-extension}
If the class $\C$ is geometrically griddable, then the class $\C^{+1}$ is also geometrically griddable.
\end{theorem}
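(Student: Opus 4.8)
The plan is to replace $M$ by a larger matrix that has ``room'' for one extra point in every possible location. Since $\C$ is geometrically griddable, fix a $\zpm$ matrix $M$ of size $t\times u$ with $\C\subseteq\Geom(M)$; removing an entry of any $\pi\in\C^{+1}$ produces a permutation of $\C\subseteq\Geom(M)$, so $\C^{+1}\subseteq\Geom(M)^{+1}$ and it suffices to exhibit a finite matrix $\widehat M$ with $\Geom(M)^{+1}\subseteq\Geom(\widehat M)$. I would take $\widehat M$ to be the $3t\times 3u$ matrix obtained from $M$ by replacing each entry $M_{k,\ell}$ with a $3\times 3$ block all of whose entries equal $1$ when $M_{k,\ell}\in\{0,1\}$ and all of whose entries equal $-1$ when $M_{k,\ell}=-1$. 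Thus each original column $k$ becomes three consecutive columns of $\widehat M$ (a ``left'', a ``middle'', and a ``right'' copy), and likewise for rows; note that the standard figure of $\widehat M$ contains a scaled copy of that of $M$ (so $\Geom(M)\subseteq\Geom(\widehat M)$), that every cell of $\widehat M$ is nonzero, and that every cell is swept out by its slope-$\pm 1$ segment.

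Now take $\pi\in\Geom(M)^{+1}$ and write it as a permutation $\sigma\in\Geom(M)$ of length $n$ together with one inserted entry, coming after the first $a$ entries of $\sigma$ in position and larger than exactly $b$ of them in value (minor modifications handle the cases where $a$ or $b$ is extreme, e.g.\ by padding $\widehat M$ with an all-$1$ first and last column and row). Fix any drawing of $\sigma$ on the standard figure of $M$, with the point of $\sigma(i)$ in cell $(k_i,\ell_i)$. Draw $\pi$ on the standard figure of $\widehat M$ by placing the point of $\sigma(i)$ in the sub-cell of block $(k_i,\ell_i)$ lying in the left copy of column $k_i$ if $i\le a$ and the right copy otherwise, and in the left copy of row $\ell_i$ if $\sigma(i)\le b$ and the right copy otherwise, using the same position along the segment as in the original drawing. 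A routine check --- relying on the fact that in any drawing the points of $\sigma$ in a common column of $M$ are encountered left-to-right in order of position, and those in a common row bottom-to-top in order of value --- shows that this reproduces the horizontal and vertical orders of $\sigma$, that the images of $\sigma$'s position-$a$ and position-$(a+1)$ points straddle an entire ``middle'' column of some block, and that the images of $\sigma$'s value-$b$ and value-$(b+1)$ points straddle an entire ``middle'' row of some block. The intersection of that middle column and that middle row is a single cell of $\widehat M$, which is nonzero, and its segment sweeps out the whole cell; placing the extra point anywhere on it yields a point set equivalent to $\pi$, so $\pi\in\Geom(\widehat M)$.

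The step I expect to be the real obstacle --- and the reason for the factor of three --- is engineering a location for the new point. Merely interleaving blank columns between the columns of $M$ does not work, because two consecutive points of $\sigma$ may lie in the same column, forcing the new point to be inserted ``in the middle'' of that column; splitting each column (and row) into three lets us push the part of $\sigma$ lying before the insertion into the left copies and the part lying after into the right copies, leaving the middle copies as an empty channel, and the crucial feature is that a middle column meets a middle row in a single cell whose segment ranges over the full cell, which decouples the horizontal and vertical placement of the new point. Keeping zero entries of $M$ as all-$1$ blocks rather than blank is what makes the channel available even through regions where the standard figure of $M$ is empty, and the order-preservation verification, though elementary, is the fiddly part. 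Finally $\C^{+1}\subseteq\Geom(M)^{+1}\subseteq\Geom(\widehat M)$ shows that $\C^{+1}$ is geometrically griddable.
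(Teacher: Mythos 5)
Your construction is correct, and it is genuinely different from the paper's. The paper first passes to a partial multiplication matrix (Proposition~\ref{prop-geom-pmm}), keeps a fixed drawing of $\pi$ on the standard figure, and then refines the gridding by adding, in every column and row, lines at the two offsets determined by the new point $x$; the partial-multiplication property is exactly what guarantees that these vertical and horizontal cuts meet each segment consistently, so the refined figure is the standard gridded figure of $M^{\times 3}$, into which a new row and column are spliced at $x$. Since the resulting $(3t+1)\times(3u+1)$ matrix depends on where $x$ lands, the paper concludes only that $\Geom(M)^{+1}$ lies in a \emph{finite union} of geometric grid classes and invokes Proposition~\ref{prop-geom-griddable-unions}. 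You instead leave the matrix refinement fixed in advance (one explicit $3t\times 3u$ matrix $\widehat M$ with every entry nonzero), and \emph{redraw} $\sigma$, pushing its points into the corner subcolumns and subrows of each block so as to vacate a middle channel in both directions; the single nonzero cell where the two channels cross absorbs the new point, with the full corner-to-corner segment decoupling its horizontal and vertical placement. Your order-preservation check is indeed routine (the only cases needing care are two points of $\sigma$ sharing a column or row of $M$, and there the left/right subcolumn assignment is consistent with position order, the bottom/top subrow assignment with value order, and equal assignments reduce to the original drawing), and the extreme cases $a\in\{0,n\}$, $b\in\{0,n\}$ work even without padding since the relevant middle channel of the extremal occupied block is already empty. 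What you gain is a proof that avoids partial multiplication matrices and the finite-union step entirely and produces a single explicit containing class $\Geom(\widehat M)$; what the paper's version buys is reuse of the offset/refinement machinery that recurs throughout its later arguments. Both give a linear-size blow-up of the matrix.
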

\begin{proof}
It suffices to prove the result for geometric grid classes themselves, and by Proposition~\ref{prop-geom-pmm}, we may further restrict our attention to the case of $\Geom(M)$ where $M$ is a partial multiplication matrix.  Take $\tau\in\Geom(M)^{+1}$ to be a one-point extension of $\pi\in\Geom(M)$.  Letting $\Lambda^\gridded=(\Lambda,G)$ denote the standard gridded figure of $M$, there is a finite independent set $P\subseteq\Lambda$ such that $(P,G)$ is equivalent to some gridding of $\pi$.  Now we may add a point, say $x$, to $P$ to obtain an independent point set which is equivalent to $\tau$.  By moving $x$ without affecting the equivalence class of $P\cup\{x\}$, we may further assume that $x$ lies in the interior of a cell.

Let $h$ and $v$ denote, respectively, the horizontal and vertical lines passing through $x$, as shown on the left of Figure~\ref{fig-gridding-refinements}.  Our goal is to create a new, refined gridding of $\Lambda$ which contains $v$ and $h$ as grid lines.

The \emph{offset} of a horizontal (resp., vertical) line is the distance between that line and the base line of the row (resp., column) of $\Lambda^\gridded$ that it slices through (recall from Section~\ref{sec-words-and-encodings} that the standard gridded figure of a partial multiplication matrix has designated base lines determined by its column and row signs).  From the definition of a partial multiplication matrix, it follows that if a vertical line of offset $d$ slices a nonempty cell of $\Lambda$, it intersects the line segment in this cell precisely where the horizontal line of offset $d$ slices the line segment.

Because the cells of a standard gridded figure have unit width, $v$ and $h$ have offsets strictly between $0$ and $1$, say $0<d_1\le d_2<1$.  By possibly moving $x$ slightly without affecting the equivalence class of $P\cup\{x\}$ (which we may do because $P\cup\{x\}$ is independent), we may assume that $0<d_1<d_2<1$.  We now define our refined gridding $H$, an example of which is shown on the right of Figure~\ref{fig-gridding-refinements}.  We take $H$ to consist of the grid lines of $G$ together with the $2t$ vertical lines which pass through each column of $\Lambda$ at the offsets $d_1$ and $d_2$, and the $2u$ horizontal lines which pass through each row of $\Lambda$ at the same two offsets.  It follows from our observations above that $(\Lambda,H)$ is equivalent to the standard gridded figure of $M^{\times 3}$, and in particular consists of a grid containing line segments of slope $\pm 1$, each of which runs from corner to corner in its cell.  By possibly moving the points slightly, we may assume that no point of $P$ lies on a grid line in $H$ (although $x$ lies on two such lines).

\begin{figure}
\begin{center}
\begin{tabular}{ccc}
\psset{xunit=0.017in, yunit=0.017in}
\psset{linewidth=0.005in}
\begin{pspicture}(-17,-18)(127,87)
\psline[linecolor=darkgray,linestyle=dashed,linewidth=0.01in]{c-c}(0,55)(120,55)
\psline[linecolor=darkgray,linestyle=dashed,linewidth=0.01in]{c-c}(45,0)(45,80)
\psline[linecolor=black,linestyle=solid,linewidth=0.02in](0,40)(40,0)
\psline[linecolor=black,linestyle=solid,linewidth=0.02in](0,40)(40,80)
\psline[linecolor=black,linestyle=solid,linewidth=0.02in](40,80)(80,40)
\psline[linecolor=black,linestyle=solid,linewidth=0.02in](80,40)(120,0)
\psline[linecolor=black,linestyle=solid,linewidth=0.02in](80,40)(120,80)
\psline[linecolor=darkgray,linestyle=solid,linewidth=0.02in]{c-c}(0,0)(0,80)
\psline[linecolor=darkgray,linestyle=solid,linewidth=0.02in]{c-c}(40,0)(40,80)
\psline[linecolor=darkgray,linestyle=solid,linewidth=0.02in]{c-c}(80,0)(80,80)
\psline[linecolor=darkgray,linestyle=solid,linewidth=0.02in]{c-c}(120,0)(120,80)
\psline[linecolor=darkgray,linestyle=solid,linewidth=0.02in]{c-c}(0,0)(120,0)
\psline[linecolor=darkgray,linestyle=solid,linewidth=0.02in]{c-c}(0,40)(120,40)
\psline[linecolor=darkgray,linestyle=solid,linewidth=0.02in]{c-c}(0,80)(120,80)
\pscircle*(45,55){0.04in}
\uput[-45](44,55){$x$}
\psline[linecolor=black,linestyle=solid,linewidth=0.01in,arrowsize=0.05in]{<-c}(-3,1)(-3,39)
\psline[linecolor=black,linestyle=solid,linewidth=0.01in,arrowsize=0.05in]{c->}(-3,41)(-3,79)
\psline[linecolor=black,linestyle=solid,linewidth=0.01in,arrowsize=0.05in]{c->}(1,-3)(39,-3)
\psline[linecolor=black,linestyle=solid,linewidth=0.01in,arrowsize=0.05in]{<-c}(41,-3)(79,-3)
\psline[linecolor=black,linestyle=solid,linewidth=0.01in,arrowsize=0.05in]{c->}(81,-3)(119,-3)
\rput[tl](-10,55){\rotateright{$\underbrace{\rule{0.28in}{0in}}$}}
\rput[r](-10,48){$d_1$}
\rput[tl](45,-5){$\underbrace{\rule{0.68in}{0in}}$}
\rput[tc](62,-14){$d_2$}
\rput[c](45,85){$v$}
\rput[l](122,55){$h$}
\end{pspicture}
&\rule{0in}{0pt}&
\psset{xunit=0.017in, yunit=0.017in}
\psset{linewidth=0.005in}
\begin{pspicture}(-3,-18)(127,87)
\psline[linecolor=black,linestyle=solid,linewidth=0.02in](0,40)(40,0)
\psline[linecolor=black,linestyle=solid,linewidth=0.02in](0,40)(40,80)
\psline[linecolor=black,linestyle=solid,linewidth=0.02in](40,80)(80,40)
\psline[linecolor=black,linestyle=solid,linewidth=0.02in](80,40)(120,0)
\psline[linecolor=black,linestyle=solid,linewidth=0.02in](80,40)(120,80)
\psline[linecolor=lightgray,linestyle=solid,linewidth=0.02in]{c-c}(0,25)(120,25)
\psline[linecolor=lightgray,linestyle=solid,linewidth=0.02in]{c-c}(0,55)(120,55)
\psline[linecolor=lightgray,linestyle=solid,linewidth=0.02in]{c-c}(15,0)(15,80)
\psline[linecolor=lightgray,linestyle=solid,linewidth=0.02in]{c-c}(65,0)(65,80)
\psline[linecolor=lightgray,linestyle=solid,linewidth=0.02in]{c-c}(95,0)(95,80)
\psline[linecolor=lightgray,linestyle=solid,linewidth=0.02in]{c-c}(35,0)(35,80)
\psline[linecolor=lightgray,linestyle=solid,linewidth=0.02in]{c-c}(45,0)(45,80)
\psline[linecolor=lightgray,linestyle=solid,linewidth=0.02in]{c-c}(115,0)(115,80)
\psline[linecolor=lightgray,linestyle=solid,linewidth=0.02in]{c-c}(0,5)(120,5)
\psline[linecolor=lightgray,linestyle=solid,linewidth=0.02in]{c-c}(0,75)(120,75)
\psline[linecolor=darkgray,linestyle=solid,linewidth=0.02in]{c-c}(0,0)(0,80)
\psline[linecolor=darkgray,linestyle=solid,linewidth=0.02in]{c-c}(40,0)(40,80)
\psline[linecolor=darkgray,linestyle=solid,linewidth=0.02in]{c-c}(80,0)(80,80)
\psline[linecolor=darkgray,linestyle=solid,linewidth=0.02in]{c-c}(120,0)(120,80)
\psline[linecolor=darkgray,linestyle=solid,linewidth=0.02in]{c-c}(0,0)(120,0)
\psline[linecolor=darkgray,linestyle=solid,linewidth=0.02in]{c-c}(0,40)(120,40)
\psline[linecolor=darkgray,linestyle=solid,linewidth=0.02in]{c-c}(0,80)(120,80)
\pscircle*(45,55){0.04in}
\uput[-45](44,55){$x$}
\psline[linecolor=black,linestyle=solid,linewidth=0.01in,arrowsize=0.05in]{<-c}(-3,1)(-3,39)
\psline[linecolor=black,linestyle=solid,linewidth=0.01in,arrowsize=0.05in]{c->}(-3,41)(-3,79)
\psline[linecolor=black,linestyle=solid,linewidth=0.01in,arrowsize=0.05in]{c->}(1,-3)(39,-3)
\psline[linecolor=black,linestyle=solid,linewidth=0.01in,arrowsize=0.05in]{<-c}(41,-3)(79,-3)
\psline[linecolor=black,linestyle=solid,linewidth=0.01in,arrowsize=0.05in]{c->}(81,-3)(119,-3)
\rput[c](45,85){$v$}
\rput[l](122,55){$h$}
\end{pspicture}
\end{tabular}
\end{center}
\caption{On the left is the standard gridded figure of a partial multiplication matrix, together with two additional (dashed) lines intersecting at the point $x$.  On the right is the refinement defined in the proof of Theorem~\ref{thm-geom-griddable-one-point-extension}.}
\label{fig-gridding-refinements}
\end{figure}

By cutting the figure $\Lambda$ at the lines $v$ and $h$ and moving the four resulting pieces, we can create a new column and row, whose cell of intersection contains $x$.  (One can also view this as expanding the lines $v$ and $h$ into a column and a row, respectively.)  We can then fill this cell of intersection with a line segment of slope $\pm 1$ (the choice is immaterial), running from corner to corner; clearly, $x$ can be shifted onto this line segment without affecting the equivalence class of $P\cup\{x\}$.  The resulting gridded figure consists of line segments that run from corner to corner in their cells, and is equivalent to the standard gridded figure of some partial multiplication matrix $N$ which is of size $(3t+1)\times (3u+1)$, from which it follows that $\tau\in\Geom(N)$.  As there are only finitely many such matrices, we see that $\Geom^{+1}(M)$ is contained in a finite union of geometric grid classes, and so is geometrically griddable by Proposition~\ref{prop-geom-griddable-unions}.
\end{proof}

\section{Gridded Permutations and Trace Monoids}
\label{sec-gridded-reg-lang}

In the next two sections we show that geometrically griddable classes have rational generating functions.  We divide this task into two stages.  First, using trace monoids, we show that every \emph{gridded} class $\Geom^\gridded(M)$ is in bijection with a regular language.  Then, because a permutation may have many valid griddings, the second part of our task, undertaken in the next section, is to remove this multiplicity, allowing us to enumerate the geometric grid class $\Geom(M)$ and all its subclasses.

To illustrate the issue we seek to understand in this section, we refer the reader to Figure~\ref{fig-trace-encodings}.  This example shows two different words which map to the same gridded permutation under $\bij^\gridded$.  This happens because the order in which points are consecutively inserted into \emph{independent} cells --- i.e., cells which share neither a column nor a row --- is immaterial.  On the language level, this means that letters which correspond to such cells ``commute''.  For example, in Figure~\ref{fig-trace-encodings}, the second and third letters of the words can be interchanged without changing the gridded permutation, which corresponds to placing the points $p_2$ and $p_3$ at different distances from their base points.

\begin{figure}
\begin{center}
\begin{tabular}{ccc}
\psset{xunit=0.017in, yunit=0.017in}
\psset{linewidth=0.005in}
\begin{pspicture}(-3,-3)(120,80)
\psline[linecolor=black,linestyle=solid,linewidth=0.02in](0,0)(40,40)
\psline[linecolor=black,linestyle=solid,linewidth=0.02in](40,40)(80,80)
\psline[linecolor=black,linestyle=solid,linewidth=0.02in](80,40)(120,0)
\psline[linecolor=black,linestyle=solid,linewidth=0.02in](80,40)(120,80)
\psline[linecolor=darkgray,linestyle=solid,linewidth=0.02in]{c-c}(0,0)(0,80)
\psline[linecolor=darkgray,linestyle=solid,linewidth=0.02in]{c-c}(40,0)(40,80)
\psline[linecolor=darkgray,linestyle=solid,linewidth=0.02in]{c-c}(80,0)(80,80)
\psline[linecolor=darkgray,linestyle=solid,linewidth=0.02in]{c-c}(120,0)(120,80)
\psline[linecolor=darkgray,linestyle=solid,linewidth=0.02in]{c-c}(0,0)(120,0)
\psline[linecolor=darkgray,linestyle=solid,linewidth=0.02in]{c-c}(0,40)(120,40)
\psline[linecolor=darkgray,linestyle=solid,linewidth=0.02in]{c-c}(0,80)(120,80)
\pscircle*(115,5){0.04in}
\pscircle*(110,70){0.04in}\uput[-45](110,70){$p_2$}
\pscircle*(15,15){0.04in}\uput[-45](15,15){$p_3$}
\pscircle*(60,60){0.04in}
\pscircle*(95,25){0.04in}
\pscircle*(90,50){0.04in}
\pscircle*(35,35){0.04in}
\psline[linecolor=black,linestyle=solid,linewidth=0.01in,arrowsize=0.05in]{c->}(-3,1)(-3,39)
\psline[linecolor=black,linestyle=solid,linewidth=0.01in,arrowsize=0.05in]{<-c}(-3,41)(-3,79)
\psline[linecolor=black,linestyle=solid,linewidth=0.01in,arrowsize=0.05in]{c->}(1,-3)(39,-3)
\psline[linecolor=black,linestyle=solid,linewidth=0.01in,arrowsize=0.05in]{<-c}(41,-3)(79,-3)
\psline[linecolor=black,linestyle=solid,linewidth=0.01in,arrowsize=0.05in]{<-c}(81,-3)(119,-3)
\end{pspicture}
&\rule{0in}{0pt}&
\psset{xunit=0.017in, yunit=0.017in}
\psset{linewidth=0.005in}
\begin{pspicture}(-3,-3)(120,80)
\psline[linecolor=black,linestyle=solid,linewidth=0.02in](0,0)(40,40)
\psline[linecolor=black,linestyle=solid,linewidth=0.02in](40,40)(80,80)
\psline[linecolor=black,linestyle=solid,linewidth=0.02in](80,40)(120,0)
\psline[linecolor=black,linestyle=solid,linewidth=0.02in](80,40)(120,80)
\psline[linecolor=darkgray,linestyle=solid,linewidth=0.02in]{c-c}(0,0)(0,80)
\psline[linecolor=darkgray,linestyle=solid,linewidth=0.02in]{c-c}(40,0)(40,80)
\psline[linecolor=darkgray,linestyle=solid,linewidth=0.02in]{c-c}(80,0)(80,80)
\psline[linecolor=darkgray,linestyle=solid,linewidth=0.02in]{c-c}(120,0)(120,80)
\psline[linecolor=darkgray,linestyle=solid,linewidth=0.02in]{c-c}(0,0)(120,0)
\psline[linecolor=darkgray,linestyle=solid,linewidth=0.02in]{c-c}(0,40)(120,40)
\psline[linecolor=darkgray,linestyle=solid,linewidth=0.02in]{c-c}(0,80)(120,80)
\pscircle*(115,5){0.04in}
\pscircle*(10,10){0.04in}\uput[-45](10,10){$p_2$}
\pscircle*(105,65){0.04in}\uput[-45](105,65){$p_3$}
\pscircle*(60,60){0.04in}
\pscircle*(95,25){0.04in}
\pscircle*(90,50){0.04in}
\pscircle*(35,35){0.04in}
\psline[linecolor=black,linestyle=solid,linewidth=0.01in,arrowsize=0.05in]{c->}(-3,1)(-3,39)
\psline[linecolor=black,linestyle=solid,linewidth=0.01in,arrowsize=0.05in]{<-c}(-3,41)(-3,79)
\psline[linecolor=black,linestyle=solid,linewidth=0.01in,arrowsize=0.05in]{c->}(1,-3)(39,-3)
\psline[linecolor=black,linestyle=solid,linewidth=0.01in,arrowsize=0.05in]{<-c}(41,-3)(79,-3)
\psline[linecolor=black,linestyle=solid,linewidth=0.01in,arrowsize=0.05in]{<-c}(81,-3)(119,-3)
\end{pspicture}
\end{tabular}
\end{center}
\caption{Two drawings of a particular gridding of the permutation $2465371$.  The drawing on the left is encoded as $\cell{3}{1}\cell{3}{2}\cell{1}{1}\cell{2}{2}\cell{3}{1}\cell{3}{2}\cell{1}{1}$ while the drawing on the right is encoded as $\cell{3}{1}\cell{1}{1}\cell{3}{2}\cell{2}{2}\cell{3}{1}\cell{3}{2}\cell{1}{1}$.}
\label{fig-trace-encodings}
\end{figure}

More precisely, suppose that $M$ is a partial multiplication matrix with cell alphabet $\Sigma$.  We say that the two words $v,w\in\Sigma^*$ are \emph{equivalent} if one can be obtained from the other by successively interchanging adjacent letters which represent independent cells.  The equivalence classes of this relation therefore form a \emph{trace monoid}, which can be defined by the presentation
$$
\langle\Sigma \:|\: \mbox{$a_{ij}a_{k\ell}=a_{k\ell}a_{ij}$ whenever $i\neq k$ and $j\neq \ell$}\rangle.
$$
An element of this monoid (an equivalence class of words in $\Sigma^\ast$) is called a \emph{trace}; it is an elementary and foundational fact about presentations that two words lie in the same trace if and only if one can be obtained from the other by a finite sequence of applications of the defining relations $a_{ij}a_{k\ell}=a_{k\ell}a_{ij}$ whenever $i\neq k$ and $j\neq \ell$ (Howie~\cite[Proposition 1.5.9 and Section 1.6]{howie:fundamentals-of:}).  The theory of trace monoids is understood to considerable depth, see for example Diekert~\cite{diekert:combinatorics-o:}.

\begin{proposition}\label{prop-gridded-encoding-trace}
Let $M$ be a partial multiplication matrix with cell alphabet $\Sigma$.  For words $v,w\in\Sigma^\ast$, we have $\bij^\gridded(v)=\bij^\gridded(w)$ if and only if $v$ and $w$ have the same trace in the trace monoid of $M$.
\end{proposition}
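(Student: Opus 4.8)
The plan is to prove the two implications separately; the hard part will be the forward direction.

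For the reverse implication, suppose $v$ and $w$ have the same trace. By the foundational fact about monoid presentations recalled above (Howie~\cite[Proposition 1.5.9 and Section 1.6]{howie:fundamentals-of:}), $w$ is obtainable from $v$ by a finite chain of words, each got from the previous by transposing two adjacent letters $\cell{i}{j}$, $\cell{k}{\ell}$ with $i\neq k$ and $j\neq\ell$; so it suffices to show that one such transposition leaves $\bij^\gridded$ fixed. Let $u=u_1\cdots u_n$ with $u_m=\cell{i}{j}$, $u_{m+1}=\cell{k}{\ell}$, $i\neq k$, $j\neq\ell$, and let $u'$ be $u$ with these two letters swapped. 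Choose distances $0<d_1<\cdots<d_n<1$ yielding a point set $P=\{p_1,\dots,p_n\}$ whose gridded permutation is $\bij^\gridded(u)$. Since $d_m$ and $d_{m+1}$ are consecutive among these distances, no point of $P$ sits at a distance strictly between them; and because cells $(i,j)$ and $(k,\ell)$ share neither a row nor a column, sliding $p_m$ along the segment in its cell from distance $d_m$ to distance $d_{m+1}+\varepsilon$ (for sufficiently small $\varepsilon>0$) never puts $p_m$ on a common horizontal or vertical line with another point of $P$. This motion keeps $p_m$ in its cell and reverses no pairwise left/right or below/above order, so the gridded permutation does not change; yet the resulting point set is precisely the one $\bij^\gridded$ assigns to $u'$ using the distances $d_1,\dots,d_{m-1},d_{m+1},d_{m+1}+\varepsilon,d_{m+2},\dots,d_n$. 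Hence $\bij^\gridded(u)=\bij^\gridded(u')$, and iterating along the chain gives $\bij^\gridded(v)=\bij^\gridded(w)$.

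For the forward implication I would use the standard characterisation of trace equivalence by projections (Diekert~\cite{diekert:combinatorics-o:}): $v$ and $w$ have the same trace if and only if, for every pair of \emph{dependent} letters $a,b$ — that is, $a=\cell{i}{j}$, $b=\cell{k}{\ell}$ with $i=k$ or $j=\ell$, the case $a=b$ included — the projections of $v$ and $w$ onto $\{a,b\}$ agree (for $a=b$ this is just $|v|_a=|w|_a$). So suppose $\bij^\gridded(v)=\bij^\gridded(w)$, write $\pi^\gridded$ for this common gridded permutation, and fix a dependent pair. If $a=b=\cell{i}{j}$, then $|v|_a$ equals the number of entries of $\pi^\gridded$ in cell $(i,j)$, hence equals $|w|_a$. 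Now suppose $a=\cell{i}{j}$ and $b=\cell{i}{\ell}$ lie in a common column $i$; the common-row case is symmetric. The key observation is that on the standard gridded figure of a partial multiplication matrix, a point placed in a cell of column $i$ at infinity-norm distance $d$ from its base point has $x$-coordinate obtained from the base line of column $i$ by moving distance $d$ in a single fixed direction, determined by the column sign $c_i$. Consequently, the entries of $\pi^\gridded$ lying in cells $(i,j)$ and $(i,\ell)$, listed in increasing order of distance from the base point, are exactly those entries listed in horizontal order when $c_i=1$, and in reverse horizontal order when $c_i=-1$. Therefore the sequence of cell-labels that appears among $\{a,b\}$ when $\bij^\gridded(v)$ is formed from $v$ with increasing distances $d_1<\cdots<d_n$ — which is precisely the projection of $v$ onto $\{a,b\}$ — is recovered from $\pi^\gridded$ alone, by listing the cells of its $(i,j)$- and $(i,\ell)$-entries in horizontal order and reversing if $c_i=-1$. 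Since this depends only on $\pi^\gridded$, the projections of $v$ and $w$ onto $\{a,b\}$ coincide; as this holds for every dependent pair, $v$ and $w$ have the same trace.

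The crux is the forward direction. Two points there need care: picking the projection criterion as the right tool (an induction on $n$, peeling off the distance-maximal entry in the cell named by the last letter of $v$, should also work but looks more delicate), and the geometric translation from ``distance from the base point'' to ``position along a column or row''. The latter genuinely exploits that $M$ is a partial multiplication matrix: the column and row signs are exactly what force every cell of a given column, and every cell of a given row, to be ``read'' in a consistent direction, so that order by distance along a line matches the left-to-right or bottom-to-top order of the corresponding entries of $\pi^\gridded$.
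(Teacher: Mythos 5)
Your proof is correct, but the forward direction takes a genuinely different route from the paper's. The paper argues by induction on the length of the words: assuming $v_1=\cell{k}{\ell}$ with (say) $c_k=r_\ell=1$, the corresponding point of $\bij^\gridded(v)$ is leftmost in its column and bottommost in its row, so the letter of $w$ producing that same point must be the first letter of $w$ lying in column $k$ or in row $\ell$; it therefore commutes to the front of $w$, reducing to the case of equal first letters, where one strips the first letter and applies the induction hypothesis. You instead invoke the Projection Lemma for trace monoids and show that, for each dependent pair of letters, the projection of a word onto that pair is an invariant of the gridded permutation it encodes --- because within a single column (or row) of the standard figure of a partial multiplication matrix, order by distance from the base point coincides with horizontal (or vertical) order, up to the reversal dictated by the column (or row) sign. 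Both arguments exploit the partial multiplication property at exactly this point; yours trades the paper's self-contained induction (which needs only the elementary fact about presentations already quoted from Howie) for a cleaner, case-free geometric invariance statement, at the cost of importing a nontrivial, though entirely standard, result from trace theory. Your reverse direction is a careful expansion of what the paper dismisses as ``clear from the preceding discussion,'' and is sound.
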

\begin{proof}
It is clear from the preceding discussion that $\bij^\gridded(v)=\bij^\gridded(w)$ whenever $v$ and $w$ have the same trace.

Now suppose that $\bij^\gridded(v)=\bij^\gridded(w)$.  We aim to prove that $v$ and $w$ have the same trace in the trace monoid of $M$.  Clearly $v$ and $w$ must have the same length, say $n$.  We prove our assertion by induction on $n$.  In the case where $n=1$, note that $\bij^\gridded(a_{k,\ell})$ consists of a single point in cell $C_{k,\ell}$, so if $|v|=|w|=1$, then $\bij^\gridded(v)=\bij^\gridded(w)$ implies that $v=w$, and thus $v$ and $w$ are in the same trace trivially.  Thus we assume that $n\ge 2$ and that the assertion is true for all words of length $n-1$.  

Write $v=v_1\cdots v_n$ and $w=w_1\cdots w_n$.  If $v_1=w_1$ then from the definition of $\bij^\gridded$ we have $\bij^\gridded(v_2\cdots v_n)=\bij^\gridded(w_2\cdots w_n)$, and the assertion follows by induction.  Therefore we may assume that $v_1\neq w_1$.

Suppose that the column and row signs of $M$ are $c_1,\dots,c_t$ and $r_1,\dots,r_u$ and that $v_1=a_{k\ell}$.  While there are four cases depending on the column and row signs $c_k$ and $r_\ell$, they are essentially identical, so we assume that $c_k=r_\ell=1$.  This means that among the points created by following the definition of $\bij^\gridded$, the point corresponding to $v_1$ in $\bij^\gridded(v)$ is the leftmost point in its column and bottommost point in its row.  Since $\bij^\gridded(v)=\bij^\gridded(w)$ as \emph{gridded} permutations, $\bij^\gridded(w)$ must also contain a point in cell $M_{k,\ell}$ which is the leftmost point in its column and bottommost point in its row; suppose this point corresponds to $w_i$.  Because $c_k=1$, the entries in column $k$ are placed from left to right, so $w_i$ must be the first letter of $w$ which corresponds to a cell in column $k$.  Similarly, $w_i$ must be the first letter of $w$ which corresponds to a cell in row $\ell$.  Therefore all of the letters $w_1,\dots,w_{i-1}$ correspond to cells which are in different rows and columns from $M_{k,\ell}$.  This shows that $w$ lies in the same trace as the word
$$
w'=w_i w_1\cdots w_{i-1} w_{i+1}\cdots w_n=v_1w_1\cdots w_{i-1} w_{i+1}\cdots w_n.
$$
Observe that $\bij^\gridded(w')=\bij^\gridded(w)$, and thus $\bij^\gridded(w')=\bij^\gridded(v)$.  Finally, because $v$ and $w'$ have the same first letter, we see that $v$ and $w'$ are in the same trace by the first case of this argument, which implies that $v$ and $w$ are in the same trace, proving the proposition.
\end{proof}

This result established, we are reduced to the task of choosing from each trace a unique representative, which is a well-understood problem.

\begin{proposition}
[Anisimov and Knuth; see Diekert~{\cite[Corollary 1.2.3]{diekert:combinatorics-o:}}]
\label{prop-trace-regular}
In any trace monoid, it is possible to choose a unique representative from each trace in such a way that the resulting set of representatives forms a regular language.
\end{proposition}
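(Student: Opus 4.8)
The plan is to choose from each trace its \emph{lexicographic normal form} and to show that the resulting transversal is a regular language. Fix an arbitrary total order on the cell alphabet $\Sigma$. Since a trace consists of finitely many words, all of the same length, over the finite alphabet $\Sigma$, each trace has a unique lexicographically least element, which I will call its \emph{normal form}; let $N\subseteq\Sigma^\ast$ be the set of all normal forms. By construction $N$ meets every trace in exactly one word, so the whole task reduces to proving that $N$ is regular.

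The first step is a purely combinatorial characterisation of $N$. Say that positions $i<j$ of a word $w=w_1\cdots w_n\in\Sigma^\ast$ form a \emph{bad pair} if $w_j<w_i$ (in the fixed order) and the cell represented by $w_j$ is independent from each of the cells represented by $w_i,w_{i+1},\dots,w_{j-1}$. If $w$ has a bad pair, then sliding the letter $w_j$ leftwards past $w_{j-1},\dots,w_i$ --- each of which commutes with it --- produces a word in the same trace that first differs from $w$ at position $i$, where it has the smaller letter $w_j$; hence $w\notin N$. For the converse, suppose $w\notin N$ and choose $w'<w$ in the trace of $w$. Traces form a cancellative monoid (a standard fact), so if $p$ is the first position at which $w$ and $w'$ disagree then $w_p\cdots w_n$ and $w'_p\cdots w'_n$ lie in the same trace; the letter $b=w'_p$ occurs among $w_p,\dots,w_n$, and since it heads the word $w'_p\cdots w'_n$ in that trace some occurrence of it, say $w_j$, commutes with all letters preceding it, namely $w_p,\dots,w_{j-1}$. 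Here $j>p$ because $w_p\neq w'_p=b$, and $b<w_p$, so $(p,j)$ is a bad pair. Therefore $N$ is exactly the set of words over $\Sigma$ containing no bad pair.

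The second step is to recognise the bad-pair-free words with a deterministic finite automaton. I would take as states all subsets $B\subseteq\Sigma$, together with one dead state. The intended meaning of the state $B$ reached after reading a prefix is: $x\in B$ precisely when that prefix contains an occurrence of some letter $a$ with $a>x$ and $a$ independent from $x$, such that every letter after that occurrence is also independent from $x$ --- in other words, precisely when reading $x$ next would complete a bad pair. The start state is $\emptyset$, every non-dead state is accepting, and from state $B$ on input $c$ the automaton moves to the dead state if $c\in B$, and otherwise to $\{\,x\in\Sigma:x\text{ is independent from }c\,\}\cap\bigl(B\cup\{\,x\in\Sigma:x<c\,\}\bigr)$. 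A short induction on the length of the prefix read verifies that this transition rule preserves the intended meaning of $B$ (using that no letter is independent from itself, so the freshly read $c$ cannot spuriously re-enter $B$). Consequently the automaton reaches the dead state exactly when the input first completes a bad pair, so it accepts precisely $N$; hence $N$ is regular, which proves the proposition.

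The step I expect to be the main obstacle is the converse half of the characterisation in the second paragraph: it is where the structure of trace monoids enters (cancellativity, and the description of which letters can head a trace), and the crucial point is that failure to be lexicographically least can always be witnessed by a bad pair of the simple \emph{local} form above, rather than only by some global rearrangement of the word. Once that is in hand, the automaton construction is routine, the only insight being that all relevant information about a prefix is captured by the single subset $B\subseteq\Sigma$.
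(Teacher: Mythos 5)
The paper does not prove this proposition at all --- it is quoted as a known result of Anisimov and Knuth with a pointer to Diekert's book --- so there is no internal argument to compare against; what you have written is a correct, essentially self-contained reconstruction of the standard proof. Your characterisation of the lexicographic normal forms as exactly the words with no ``bad pair'' is the classical one, and both directions check out: the forward direction is the explicit commuting slide, and the converse correctly reduces, via cancellativity, to the fact that if a trace has a representative beginning with $b$ then the first occurrence of $b$ in any representative commutes with everything before it. That last fact is the one place you lean on trace-monoid theory without proof; it deserves the one-line justification that for any pair of \emph{dependent} letters the relative order of their occurrences is invariant under the defining relations (the projection lemma), which is immediate since each relation only transposes independent letters. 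Cancellativity is likewise standard. Your automaton is also correct: the set $B$ of letters that would currently complete a bad pair is exactly the right state information, your transition formula $\{x : x\perp c\}\cap\bigl(B\cup\{x : x<c\}\bigr)$ correctly handles both the case where the offending larger letter is $c$ itself and the case where it lies further back, and the observation that $c\notin B'$ because no letter is independent of itself is the right sanity check (one should also say the dead state is absorbing). The only cosmetic caveat is that your argument is phrased for the specific commutation relation of the cell alphabet, but nothing in it uses more than an arbitrary symmetric irreflexive independence relation, so it does prove the proposition in the stated generality.
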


We immediately obtain the following.

\begin{corollary}
\label{cor-geometric-gridded-reg}
For every partial multiplication matrix $M$, the (gridded) class $\Geom^\gridded(M)$ is in bijection with a regular language.
\end{corollary}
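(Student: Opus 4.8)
The plan is simply to assemble the pieces already in hand. Proposition~\ref{prop-properties-of-bij} tells us that $\bij^\gridded\st\Sigma^\ast\to\Geom^\gridded(M)$ is onto, while Proposition~\ref{prop-gridded-encoding-trace} identifies its fibres: $\bij^\gridded(v)=\bij^\gridded(w)$ exactly when $v$ and $w$ determine the same trace in the trace monoid of $M$. Thus $\bij^\gridded$ factors through the trace monoid, inducing a well-defined bijection between the set of traces and $\Geom^\gridded(M)$.

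Next I would invoke Proposition~\ref{prop-trace-regular} (Anisimov and Knuth) to pick, for each trace, a distinguished representative word in $\Sigma^\ast$, with the property that the set $R$ of all such representatives is a regular language. Restricting the encoding to this set gives a map $\bij^\gridded|_R\st R\to\Geom^\gridded(M)$, and the claim is that this map is the desired bijection.

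It remains to check bijectivity. For surjectivity: given $\pi^\gridded\in\Geom^\gridded(M)$, by surjectivity of $\bij^\gridded$ there is some $w\in\Sigma^\ast$ with $\bij^\gridded(w)=\pi^\gridded$; the representative $r\in R$ of the trace of $w$ then satisfies $\bij^\gridded(r)=\bij^\gridded(w)=\pi^\gridded$ by Proposition~\ref{prop-gridded-encoding-trace}. For injectivity: if $v,w\in R$ with $\bij^\gridded(v)=\bij^\gridded(w)$, then $v$ and $w$ lie in the same trace by Proposition~\ref{prop-gridded-encoding-trace}, and since $R$ contains exactly one representative per trace, $v=w$. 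Hence $\bij^\gridded|_R$ is a bijection between the regular language $R$ and $\Geom^\gridded(M)$.

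There is no genuine obstacle here beyond correctly citing the ingredients; the one point worth stating carefully is that restricting $\bij^\gridded$ to a transversal of the trace equivalence preserves surjectivity precisely because $\bij^\gridded$ is constant on traces. I would also note that, since $\bij^\gridded$ is length-preserving (Proposition~\ref{prop-properties-of-bij}), this bijection respects length, so $\Geom^\gridded(M)$ and $R$ have the same generating function, which together with Theorem~\ref{thm-reg-lang-rat-gf} shows that this generating function is rational.
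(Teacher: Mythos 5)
Your argument is exactly the one the paper intends: Corollary~\ref{cor-geometric-gridded-reg} is stated there as an immediate consequence of Propositions~\ref{prop-properties-of-bij}, \ref{prop-gridded-encoding-trace}, and~\ref{prop-trace-regular}, and you have simply written out the routine verification that restricting $\bij^\gridded$ to a regular transversal of the trace equivalence yields a bijection. Your closing remark that the bijection is length-preserving is a correct and useful observation, though not needed for the statement itself.
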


\section{Regular Languages and Rational Generating Functions}
\label{sec-geom-reg-lang}

We now shift our attention to the \emph{ungridded} permutations in a geometrically griddable class.  This is the most technical argument of the paper, and we outline the general approach before delving into the formalisation.

The crux of the issue relates to the ungridded version of the encoding map $\bij$ and arises because it is possible that $\bij(v)=\bij(w)$ for two words $v,w\in\Sigma^\ast$ even though $\bij^\gridded(v)\neq \bij^\gridded(w)$.  This happens precisely when $\pi=\bij(v)=\bij(w)$ admits two different griddings.  To discuss these different griddings, we say that the gridded permutation $(\pi,G)\in\Geom^\gridded(M)$ is a \emph{$\Geom^\gridded(M)$ gridding} of the permutation $\pi\in\Geom(M)$.  Since our goal is to establish a bijection between any geometrically griddable class $\C$ and a regular language, if a permutation in $\C$ has multiple $\Geom^\gridded(M)$ griddings we must choose only one.  To do so, we introduce a total order on the set of all such griddings of a fixed permutation $\pi$, and aim to keep only those which are minimal in this order.

The problem is thus translated to that of recognising minimal $\Geom^\gridded(M)$ griddings: Given a word $w\in\Sigma^\ast$, how can we determine if the gridded permutation $\bij^\gridded(w)$ represents the minimal $\Geom^\gridded(M)$ gridding of $\pi=\bij(w)$?  If not, then there is a lesser gridding of $\pi$, given by some $\bij^\gridded(v)$.  The fact that $\bij^\gridded(v)$ is less than $\bij^\gridded(w)$ is witnessed by the position of one or more particular entries which lie in different cells in $\bij^\gridded(v)$ and $\bij^\gridded(w)$.

In order to discuss these entries, we use the terminology of marked permutations and marked words.  A \emph{marked permutation} is a permutation in which the entries are allowed to be marked, which we designate with an overline.  (Thus our marked permutations look like some other authors' signed permutations, although the marking is meant to convey a completely different concept.)  The containment order extends naturally, whereby we make sure that the markings line up.  Formally we say that the marked permutation $\pi$ of length $n$ contains the marked permutation $\sigma$ of length $k$ if $\pi$ has a subsequence $\pi(i_1),\pi(i_2),\dots,\pi(i_k)$ such that:
\begin{itemize}
\item $\pi(i_1)\pi(i_2)\cdots\pi(i_k)$ is order isomorphic to $\sigma$ (this is the standard containment order), and
\item for each $1\le j\le k$, $\pi(i_j)$ is marked if and only if $\sigma(j)$ is marked.
\end{itemize}
For example, $\pi=\overline{3}\overline{9}1\overline{8}\overline{6}7\overline{4}\overline{5}2$ contains $\sigma=\overline{5}1\overline{3}42$, as can be seen by considering the subsequence $\overline{9}1\overline{6}72$.  A \emph{marked permutation class} is then a set of marked permutations which is closed downward under this containment order.

The mappings $\bij$ and $\bij^\gridded$ can be extended in the obvious manner to order-preserving mappings $\overline{\bij}$ and $\overline{\bij}^\gridded$ from $(\Sigma\cup\overline{\Sigma})^\ast$ to the marked version of $\Geom(M)$, without and with grid lines, respectively, where here $\overline{\Sigma}$ is the \emph{marked cell alphabet} $\{\overline{a}\st a\in\Sigma\}$, and both mappings send marked letters to marked entries.

\begin{theorem}
\label{thm-geom-griddable-reg-lang}
Every geometrically griddable class is in bijection with a regular language, and thus has a rational generating function.
\end{theorem}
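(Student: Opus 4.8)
The plan is to promote the finite-to-one encoding $\bij\colon\Sigma^\ast\to\Geom(M)$ to an actual bijection by selecting one canonical gridding per permutation, and then to show that the set of words encoding canonical griddings is regular. By Proposition~\ref{prop-geom-pmm} we may assume $\C\subseteq\Geom(M)$ with $M$ a partial multiplication matrix having cell alphabet $\Sigma$, and the first step is to reduce from an arbitrary subclass $\C$ to $\Geom(M)$ itself. Since $\bij$ is order-preserving (Proposition~\ref{prop-properties-of-bij}) and, conversely, every subpermutation of $\bij(w)$ is realised by the corresponding subword of $w$ (immediate from the construction of $\bij$, which does not depend on the chosen distances), a permutation $\sigma$ satisfies $\sigma\le\bij(w)$ exactly when $w$ contains some word of the finite set $\bij^{-1}(\sigma)$. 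By Theorem~\ref{thm-geom-griddable-fin-basis} we have $\C=\Av(B)$ with $B$ finite, so $\bij^{-1}(\C)$ is precisely the set of words avoiding the finite word-set $\bigcup_{\beta\in B}\bij^{-1}(\beta)$; this is subword-closed, hence (by Proposition~\ref{prop-subword-closed-decomp}) regular. So it suffices to exhibit a regular $L\subseteq\Sigma^\ast$ on which $\bij$ restricts to a bijection onto $\Geom(M)$: then $L\cap\bij^{-1}(\C)$ is regular and in bijection with $\C$, and the rational generating function follows from Theorem~\ref{thm-reg-lang-rat-gf}.

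To construct $L$, use Corollary~\ref{cor-geometric-gridded-reg} to fix a regular language $R\subseteq\Sigma^\ast$ on which $\bij^\gridded$ restricts to a bijection onto $\Geom^\gridded(M)$. Each $\Geom^\gridded(M)$ gridding of a permutation $\pi$ records, for every entry of $\pi$, the cell containing it; fixing a total order on cells and reading the entries from left to right turns this data into a word over the cell alphabet, and I declare one gridding of $\pi$ to be smaller than another when the associated word is lexicographically smaller. Let $g(\pi)$ be the minimal gridding of $\pi$ in this order and set
$$
L=\{\,w\in R\st \bij^\gridded(w)=g(\bij(w))\,\}.
$$
Distinct permutations have distinct minimal griddings, and each minimal gridding is $\bij^\gridded(w)$ for a unique $w\in R$, so $w\mapsto\bij(w)$ is a bijection $L\to\Geom(M)$. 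Everything thus reduces to proving that $R\setminus L$ --- the set of trace representatives encoding \emph{non-minimal} griddings --- is a regular language.

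This is the crux. A gridding of $\pi$ is non-minimal precisely when some entries can be moved to strictly smaller cells so that the result is still a valid $\Geom^\gridded(M)$ gridding, and such a move is witnessed by the relative order, values, and old and new cells of the entries involved together with a bounded neighbourhood. I would encode such a witness as a marked gridded permutation --- marking the moved entries and recording, through an enlarged (but still finite) marked cell alphabet, each moved entry's old and new cell --- so that, reading witnesses through $\overline{\bij}$ and its gridded counterpart, the words encoding non-minimal griddings correspond to those whose marked-up versions lie in a certain downward-closed family $W$ of marked permutations. Because $W$ embeds into the marked version of $\Geom(M)$, which is partially well ordered by the Higman argument underlying Theorem~\ref{thm-geom-griddable-pwo} applied over the finite alphabet $\Sigma\cup\overline{\Sigma}$, the family $W$ is itself partially well ordered and hence, being downward closed, finitely based. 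Translating a finite basis of $W$ back to $\Sigma^\ast$ --- using once more that subwords realise subpatterns and that $\overline{\bij}$ is finite-to-one --- exhibits $R\setminus L$ as the set of words in $R$ containing one of finitely many words, whence $R\setminus L$, and therefore $L$, is regular.

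I expect Step three to be the main obstacle, and within it two points require genuine care: first, verifying that non-minimality always admits a bounded, pattern-sized witness, so that the forbidden-pattern machinery can be brought to bear at all; and second, choosing the total order on griddings and the marking convention so that the family of witnesses is genuinely closed under passing to marked subpatterns --- without this, the witnesses need not form a finitely based set. Settling these geometric and order-theoretic details is where the real work lies; the remainder merely assembles the regular-language facts of Corollary~\ref{cor-geometric-gridded-reg}, Theorem~\ref{thm-geom-griddable-fin-basis}, and Theorem~\ref{thm-reg-lang-rat-gf}.
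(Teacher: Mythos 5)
Your overall architecture matches the paper's: reduce to a partial multiplication matrix, start from the regular language of trace representatives supplied by Corollary~\ref{cor-geometric-gridded-reg}, select the minimal gridding of each permutation under a total order, and detect non-minimal griddings via marked letters so that regular-language closure properties finish the job. (Your treatment of the subclass $\C$, via the observation that $\bij^{-1}(\C)$ is subword-closed and hence regular, is correct and if anything more explicit than the paper's.) But the proposal has a genuine gap exactly where you admit ``the real work lies'': you never commit to an order on griddings and a marking convention and verify that the resulting family of witnesses is closed under taking marked subpatterns. That closure is the entire content of the theorem, not a detail to be deferred. The paper does not use a lexicographic order on the sequence of cells of individual entries; it compares two griddings by the \emph{number} of entries in each column, scanning columns left to right and then rows bottom to top, declares the first column (or row) where the counts differ to be the witness, and marks precisely those entries lying in the witnessing column (or row) under the smaller gridding $G$ but not under $H$. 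This specific choice is engineered so that the set $\overline{\C}^\gridded$ of marked gridded permutations arising from triples $(\pi,G,H)$ is downward closed, whence its preimage $\overline{J}$ under $\overline{\bij}^\gridded$ is subword-closed and therefore regular. Without carrying out this verification for \emph{some} concrete order and marking, the theorem is not proved.

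There is also a concrete error in your final step. The set $R\setminus L$ of words encoding non-minimal griddings is \emph{not} ``the set of words in $R$ containing one of finitely many words'': that description would force $R\setminus L$ to be upward closed under the subword order within $R$, which fails because inserting additional entries into $\pi$ can destroy the alternative gridding $G$ that witnessed the non-minimality of $H$. The correct deduction, as in the paper, is that the words to be discarded form the set $\Gamma\bigl(\overline{J}\cap\bigl((\Sigma\cup\overline{\Sigma})^\ast\setminus\Sigma^\ast\bigr)\bigr)$, which is regular because regular languages are closed under intersection, complementation, and images under the mark-erasing homomorphism $\Gamma$ --- not because it is a filter generated by finitely many forbidden subwords.
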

\begin{proof}
Let $\C$ be a geometrically griddable class.  By Proposition~\ref{prop-geom-pmm}, $\C\subseteq\Geom(M)$ for a partial multiplication matrix $M$.  By Corollary~\ref{cor-geometric-gridded-reg}, there is a regular language, say $L^\gridded$, such that $\bij^\gridded\st L^\gridded\rightarrow\Geom^\gridded(M)$ is a bijection.

We begin by defining a total order on the various griddings of each permutation $\pi\in\Geom(M)$, and thus also on the $\Geom^\gridded(M)$ griddings of permutations in $\C$.  Roughly, this order prefers griddings in which the entries of $\pi$ lie in cells as far to the left and bottom as possible, or, in terms of grid lines, the order prefers griddings in which the grid lines lie as far to the right and top as possible.  Suppose we have two different $\Geom^\gridded(M)$ griddings of $\pi$ given by the grid lines $G$ and $H$.  Because these griddings are different, there must be a leftmost vertical grid line, or failing that, a bottommost horizontal grid line, which moved.  Note that in the former case, $G$ and $H$ will contain a different number of entries in the corresponding column, while in the latter case they will contain a different number of entries in the corresponding row.

Formally, if $(\pi,G)$ contains the same number of entries as $(\pi,H)$ in each of the leftmost $k-1$ columns, but contains more entries than $(\pi,H)$ in column $k$, then we write $(\pi,G)\gridlex (\pi,H)$, and we say that column $k$ \emph{witnesses} this fact.  Otherwise, if $(\pi,G)$ and $(\pi,H)$ contain the same number of entries in each column and in each of the bottom $\ell-1$ rows, but $(\pi,G)$ contains more entries than $(\pi,H)$ in row $\ell$, then $(\pi,G)\gridlex (\pi,H)$, and we say that row $\ell$ \emph{witnesses} this fact.  Given a permutation $\pi\in\C$, our aim is to choose, from all $\Geom^\gridded(M)$ griddings of $\pi$, the minimal one.

Given a triple $(\pi,G,H)$ where $(\pi,G)$ and $(\pi,H)$ are $\Geom^\gridded(M)$ griddings of some $\pi\in\C$ with $(\pi,G)\gridlexeq(\pi,H)$, we mark the entries of the gridded permutation $(\pi,H)$ in the following manner.  If $(\pi,G)=(\pi,H)$, then no markings are applied.  If $(\pi,G)\gridlex (\pi,H)$ is witnessed by column $k$ then we mark those entries of $\pi$ which lie in column $k$ in $(\pi,G)$ but not in $(\pi,H)$.  Otherwise, if $(\pi,G)\gridlex (\pi,H)$ is witnessed by row $\ell$ then we similarly mark those entries of $\pi$ which lie in row $\ell$ in $(\pi,G)$ but not in $(\pi,H)$.

Let $\overline{\C}^\gridded$ denote the set of all marked gridded permutations obtained from triples $(\pi,G,H)$, $\pi\in\C$, in this manner.  Because $\Geom^\gridded(M)$ griddings are inherited by subpermutations and $\overline{\bij}^\gridded$ is order-preserving, the language
$$
\overline{J}
=
\left(\overline{\bij}^\gridded\right)^{-1}\left(\overline{\C}^\gridded\right)
$$
is subword-closed in $\left(\Sigma\cup\overline{\Sigma}\right)^\ast$.  In particular, $\overline{J}$ is a regular language.  Loosely speaking, the words in $\overline{J}$ containing marked letters carry information about all the non-minimal griddings.  Our goal is therefore to recognise these non-minimal griddings in $\Sigma^\ast$, and remove them from the language $L^\gridded$.

Consider any word $w\in L^\gridded$, which encodes the $\Geom^\gridded(M)$-gridded permutation $\bij^\gridded(w)=(\pi,H)$.  The gridding given by $H$ is \emph{not} the minimal $\Geom^\gridded(M)$ gridding of $\pi$ precisely if $\overline{J}$ contains a copy of $w$ with one or more marked letters.  Let $\Gamma:\left(\Sigma\cup\overline{\Sigma}\right)^\ast\to\Sigma^\ast$ denote the homomorphism which removes markings, i.e., the homomorphism given by $a,\overline{a}\mapsto a$.  The words which represent non-minimal griddings (precisely the words we wish to remove from $L^\gridded$) are therefore the set
$$
K=\Gamma\left(
J\cap
\left(\left(\Sigma\cup\overline{\Sigma}\right)^\ast\setminus\Sigma^\ast\right)
\right).
$$
By the basic properties of regular languages, it can be seen that $K$ and hence $L=L^\gridded\setminus K$ are regular.  The proof is then complete as $\bij:L\rightarrow\C$ is a bijection.
\end{proof}

\section{Indecomposable and Simple Permutations}
\label{sec-geom-simples}

Here we adapt the techniques of the previous section to establish bijections between regular languages and three structurally important subsets of geometrically griddable classes.

An \emph{interval} in the permutation $\pi$ is a set of contiguous indices $I=[a,b]$ such that the set of values $\pi(I)=\{\pi(i) : i\in I\}$ is also contiguous.  Every permutation of length $n$ has trivial intervals of lengths $0$, $1$, and $n$; the permutation $\pi$ of length at least $2$ is said to be \emph{simple} if it has no other intervals.  The importance of simple permutations in the study of permutation classes has been recognised since Albert and Atkinson~\cite{albert:simple-permutat:}, whose terminology we follow; we refer to Brignall~\cite{brignall:a-survey-of-sim:} for a recent survey.

Given a permutation $\sigma$ of length $m$ and nonempty permutations $\alpha_1,\dots,\alpha_m$, the \emph{inflation} of $\sigma$ by $\alpha_1,\dots,\alpha_m$ --- denoted $\sigma[\alpha_1,\dots,\alpha_m]$ --- is the permutation obtained by replacing each entry $\sigma(i)$ by an interval that is order isomorphic to $\alpha_i$ in such a way that the intervals are order isomorphic to $\sigma$.  For example,
$$
2413[1,132,321,12]=4\ 798\ 321\ 56.
$$

Two particular types of inflations have been given their own names.  These are the \emph{direct sum}, or simply \emph{sum}, $\pi\oplus\sigma=12[\pi,\sigma]$ and the \emph{skew sum} $\pi\ominus\sigma=21[\pi,\sigma]$.  We say that a permutation is \emph{sum indecomposable} if it is not the sum of two shorter permutations and \emph{skew indecomposable} if it is not the skew sum of two shorter permutations.

These notions defined, we are ready to construct regular languages which encode the simple, sum indecomposable, and skew indecomposable permutations in a geometrically griddable class.  Our approach mirrors the proof of Theorem~\ref{thm-geom-griddable-reg-lang}.

\begin{theorem}
\label{thm-geom-simple-reg-lang}
The simple, sum indecomposable, and skew indecomposable permutations in every geometrically griddable class are each in bijection with a regular language, and thus have rational generating functions.
\end{theorem}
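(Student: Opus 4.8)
The plan is to mirror, almost line for line, the proof of Theorem~\ref{thm-geom-griddable-reg-lang}. Fix a geometrically griddable class $\C$; by Proposition~\ref{prop-geom-pmm} we may take $\C\subseteq\Geom(M)$ for a partial multiplication matrix $M$ with cell alphabet $\Sigma$, and from the proof of Theorem~\ref{thm-geom-griddable-reg-lang} we have in hand a regular language $L\subseteq\Sigma^\ast$ for which $\bij\colon L\to\C$ is a bijection. It then suffices, for each of the three properties $P$ under consideration --- ``sum decomposable'', ``skew decomposable'', and ``non-simple'' --- to prove that the sublanguage $L_P=\{w\in L\st \bij(w)\text{ has property }P\}$ is regular. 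Indeed, $L\setminus L_{\text{sum dec.}}$, $L\setminus L_{\text{skew dec.}}$, and $(L\setminus L_{\text{non-simple}})\cap\{\text{words of length}\ge 2\}$ are then regular, and $\bij$ carries them bijectively onto the sum indecomposable, skew indecomposable, and simple permutations of $\C$, respectively (the length restriction accounting for the convention that permutations of length $0$ and $1$ are not simple); the rationality of the generating functions follows from Theorem~\ref{thm-reg-lang-rat-gf}.

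To handle each $L_P$ I would reuse the marked-word machinery of the previous section, but now with the markings recording a \emph{witness} to property $P$ instead of to a non-minimal gridding. Recall the marked cell alphabet $\overline\Sigma$, the order-preserving map $\overline\bij\colon(\Sigma\cup\overline\Sigma)^\ast\to\text{(marked version of }\Geom(M))$, and the marking-erasing homomorphism $\Gamma\colon(\Sigma\cup\overline\Sigma)^\ast\to\Sigma^\ast$. Define three sets of marked permutations: $\overline{\D}_\oplus$, consisting of those $\overline\pi$ with $\pi\in\C$ such that every marked entry of $\overline\pi$ lies below and to the left of every unmarked entry (so $\pi$ is the direct sum of its marked part and its unmarked part, either possibly empty); $\overline{\D}_\ominus$, the analogue with ``above and to the left''; and $\overline{\D}_{\mathrm{int}}$, consisting of those $\overline\pi$ with $\pi\in\C$ whose set of marked entries forms an interval of $\pi$. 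The key point --- and the one demanding an argument --- is that each of these three sets is closed downward under the marked containment order. For $\overline{\D}_\oplus$ and $\overline{\D}_\ominus$ this is immediate, since the relative-position condition on marked versus unmarked entries is inherited by sub-marked-permutations. For $\overline{\D}_{\mathrm{int}}$ one argues that if $\overline\sigma\le\overline\pi$ and $I$ is the interval of $\pi$ consisting of its marked entries, then the surviving entries of $I$ occupy consecutive positions of $\sigma$ (no surviving entry of $\pi$ lies strictly between two positions of $I$, since \emph{all} such positions lie in $I$) and, symmetrically, consecutive values, so they again form an interval. Consequently each $\overline J_P:=\overline\bij^{-1}(\overline{\D}_P)$ is subword-closed in $(\Sigma\cup\overline\Sigma)^\ast$, hence --- by Higman's Theorem together with Proposition~\ref{prop-subword-closed-decomp} --- a regular language.

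It remains to recover $L_P$ from $\overline J_P$, which is the same homomorphism-and-intersection bookkeeping used to form the language $K$ in the proof of Theorem~\ref{thm-geom-griddable-reg-lang}. A permutation $\bij(w)$ with $w\in L$ is sum decomposable exactly when some marking of it lies in $\overline{\D}_\oplus$ with both the marked and unmarked parts nonempty; since every marking of $\bij(w)$ is realized as $\overline\bij(\overline w)$ for some $\overline w$ with $\Gamma(\overline w)=w$, this says precisely that $w\in\Gamma(\overline J_\oplus')$, where $\overline J_\oplus'$ is the intersection of $\overline J_\oplus$ with the (patently regular) languages ``at least one marked letter'' and ``at least one unmarked letter''. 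Thus $L_{\text{sum dec.}}=L\cap\Gamma(\overline J_\oplus')$, which is regular because $\overline J_\oplus'$ is regular, homomorphic images of regular languages are regular, and regular languages are closed under intersection. The skew case is identical, using $\overline{\D}_\ominus$. For non-simplicity, $\bij(w)$ is non-simple iff it has an interval of size at least $2$ and at most $|\bij(w)|-1$, i.e.\ iff $w\in\Gamma(\overline J_{\mathrm{int}}')$ where $\overline J_{\mathrm{int}}'$ is $\overline J_{\mathrm{int}}$ intersected with ``at least two marked letters'' and ``at least one unmarked letter''; so $L_{\text{non-simple}}=L\cap\Gamma(\overline J_{\mathrm{int}}')$ is regular, completing the reduction.

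The only step I expect to require genuine care is the verification that $\overline{\D}_{\mathrm{int}}$ is downward closed, since one must think a little about what the surviving part of an interval looks like after entries are deleted. Related to this is one conceptual subtlety worth flagging in the writeup: it is tempting to mark only a \emph{canonical} witness (say, the sum-indecomposable lower summand, or a minimal nontrivial interval), but such a choice is not inherited by subpermutations, so the marked set would fail to be downward closed; the fix, as above, is to admit \emph{all} markings compatible with a decomposition or an interval and then impose the nonemptiness/size requirements afterwards by intersecting with elementary regular languages. Everything else is a routine transcription of arguments already carried out for Theorem~\ref{thm-geom-griddable-reg-lang}.
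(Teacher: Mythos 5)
Your proposal is correct and follows essentially the same route as the paper: the paper likewise marks all ``interval consistent'' (and sum/skew consistent) markings of permutations in $\C$, pulls them back under $\overline{\bij}$ to a subword-closed (hence regular) language, intersects with the regular language expressing the nonemptiness/size conditions, and applies $\Gamma$ to excise the offending words from $L$. The only differences are cosmetic --- you fix the order of the sum decomposition where the paper allows either, and you spell out the downward-closure check for interval markings that the paper leaves implicit.
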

\begin{proof}
We give complete details only for the case of simple permutations; the minor modifications needed to handle sum indecomposable and skew indecomposable permutations are indicated at the end of the proof.

Let $\C$ be a geometrically griddable class.  As usual, Proposition~\ref{prop-geom-pmm} implies that $\C\subseteq\Geom(M)$ for a partial multiplication matrix $M$.  By Theorem~\ref{thm-geom-griddable-reg-lang}, there is a regular language $L$ such that $\bij:L\to\C$ is a bijection.

Let $\overline{\C}$ denote the set of all permutations in $\C$ with all possible markings.  We say that the markings of a marked permutation are \emph{interval consistent} if the marked entries of the permutation form a (possibly trivial) interval.  Let $\overline{\mathcal{I}}$ consist of all marked permutations in $\overline{\C}$ with interval consistent markings.  Because $\overline{\bij}$ is order-preserving, the preimage
$$
\overline{J}=\overline{\bij}^{-1}(\overline{\mathcal{I}})
$$
is subword-closed in $\left(\Sigma\cup\overline{\Sigma}\right)^\ast$, and thus is a regular language.

Now consider any permutation $\pi\in\C$.  This permutation is simple if and only if it does not have a nontrivial interval.  In terms of our markings, therefore, $\pi$ is simple if and only if there is no interval consistent marking of $\pi$ which contains at least two marked entries and at least one unmarked entry.  On the language level, a word over $\left(\Sigma\cup\overline{\Sigma}\right)^\ast$ has at least two marked entries and at least one unmarked entry precisely if it lies in
$$
\left(\Sigma\cup\overline{\Sigma}\right)^\ast
\setminus
\left(
\Sigma^\ast
\cup
\overline{\Sigma}^\ast
\cup
\Sigma^\ast\overline{\Sigma}\Sigma^\ast
\right).
$$
Therefore the words in $\Sigma^\ast$ which represent non-simple permutations in $\C$ are precisely those in the set
$$
K=\Gamma\left(
\overline{J}
\cap
\left(
\left(\Sigma\cup\overline{\Sigma}\right)^\ast
\setminus
\left(
\Sigma^\ast
\cup
\overline{\Sigma}^\ast
\cup
\Sigma^\ast\overline{\Sigma}\Sigma^\ast
\right)
\right)
\right),
$$
where, as in the proof of Theorem~\ref{thm-geom-griddable-reg-lang}, $\Gamma:\left(\Sigma\cup\overline{\Sigma}\right)^\ast\to\Sigma^\ast$ denotes the homomorphism which removes markings.  The simple permutations in $\C$ are therefore encoded by the regular language $L\setminus K$, completing the proof of that case.

This proof can easily be adapted to the case of sum (resp., skew) indecomposable permutations by defining markings to be \emph{sum consistent} (resp., \emph{skew consistent}) if the underlying permutation is the sum (resp., skew sum) of its marked entries and its unmarked entries (in either order).
\end{proof}

\section{Atomic Decompositions}
\label{sec-atomic-decompositions}

The intersection of two geometrically griddable classes is trivially geometrically griddable, and as we observed in Proposition~\ref{prop-geom-griddable-unions}, their union is geometrically griddable as well.  Therefore, within the lattice of permutation classes, the collection of geometrically griddable classes forms a sublattice.  In this section we consider geometrically griddable classes from a lattice-theoretic viewpoint.


The permutation class $\C$ is \emph{join-irreducible} (in the usual lattice-theoretic sense) if $\C\neq\D\cup\E$ for two proper subclasses $\D,\E\subsetneq\C$.  In deference to existing literature on permutation classes, we refer to join-irreducible classes as \emph{atomic}.  It is not difficult to show that the {\it joint embedding property\/} is a necessary and sufficient condition for the permutation class $\C$ to be atomic; this condition states that for all $\pi,\sigma\in\C$, there is a $\tau\in\C$ containing both $\pi$ and $\sigma$.

Fra{\"{\i}}ss\'e~\cite{fraisse:sur-lextension-:} studied atomic classes in the more general context of relational structures, and established another necessary and sufficient condition.  Specialised to the context of permutations, given two linearly ordered sets $A$ and $B$ and a bijection $f:A\rightarrow B$, every finite subset $\{a_1<\cdots<a_n\}\subseteq A$ maps to a finite sequence $f(a_1),\dots,f(a_n)\in B$ that is order isomorphic to a unique permutation.  We call the set of permutations that arise in this manner the {\it age of $f$\/}, denoted $\Age(f:A\rightarrow B)$.  A proof of the following result in the language of permutations can also be found in Atkinson, Murphy and Ru\v{s}kuc~\cite{atkinson:pattern-avoidan:}.

\begin{theorem}[Fra{\"{\i}}ss{\'e}~\cite{fraisse:sur-lextension-:}; see also Hodges~{\cite[Section 7.1]{hodges:model-theory:}}]\label{atomic-tfae}
The following three conditions are equivalent for a permutation class $\C$:
\begin{enumerate}
\item[(1)] $\C$ is atomic,
\item[(2)] $\C$ satisfies the joint embedding property, and
\item[(3)] $\C=\Age(f:A\rightarrow B)$ for a bijection $f$ between two countable linear orders $A$ and $B$.
\end{enumerate}
\end{theorem}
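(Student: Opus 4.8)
The plan is to prove the two equivalences $(1)\Leftrightarrow(2)$ and $(2)\Leftrightarrow(3)$; the first pair is a short order-theoretic observation, and essentially all the work is in one half of the second. For $(1)\Leftrightarrow(2)$ I would argue by contrapositives. If $\C$ fails the joint embedding property, there are $\pi,\sigma\in\C$ with no common extension in $\C$; then $\D=\{\rho\in\C\st\pi\not\le\rho\}$ and $\E=\{\rho\in\C\st\sigma\not\le\rho\}$ are permutation classes, each proper (as $\pi\notin\D$ and $\sigma\notin\E$), and $\D\cup\E=\C$ because no member of $\C$ can involve both $\pi$ and $\sigma$, so $\C$ is not atomic. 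Conversely, if $\C=\D\cup\E$ with $\D,\E\subsetneq\C$, pick $\pi\in\C\setminus\D$ and $\sigma\in\C\setminus\E$; a common extension $\tau\in\C$ of $\pi$ and $\sigma$ lies in $\D$ or in $\E$, and since both are downsets this forces $\pi\in\D$ or $\sigma\in\E$, a contradiction, so $\C$ fails the joint embedding property. The implication $(3)\Rightarrow(2)$ is immediate: if $\C=\Age(f\st A\to B)$ and $\pi,\sigma\in\C$ arise from finite subsets $A_\pi,A_\sigma\subseteq A$, then the permutation $f$ induces on $A_\pi\cup A_\sigma$ lies in $\C$ and contains both $\pi$ and $\sigma$.

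The substance of the theorem is $(2)\Rightarrow(3)$, which I would obtain by a direct-limit (one-sided ``forth'') construction. Every permutation class is countable, so fix an enumeration $\C=\{\gamma_1,\gamma_2,\dots\}$. The plan is to build an increasing chain $F_0\subseteq F_1\subseteq F_2\subseteq\cdots$ of finite independent subsets of $\mathbb{Q}^2$, starting from $F_0=\emptyset$, such that each $F_i$ is (figure) equivalent to a permutation in $\C$ and, for $i\ge 1$, contains a subfigure order isomorphic to $\gamma_i$. The inductive step is precisely where the joint embedding property is used: given $F_{i-1}\equivfig\rho\in\C$ and the target $\gamma_i$, the joint embedding property supplies $\tau\in\C$ containing both $\rho$ and $\gamma_i$; fix a copy of $\rho$ inside the plot of $\tau$ together with the (figure) equivalence carrying it to $F_{i-1}$, and extend that equivalence over the finitely many remaining points of the plot of $\tau$. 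This extension is possible because $\mathbb{Q}$ is a dense linear order without endpoints, so any finite increasing partial injection from $\mathbb{Q}$ into $\mathbb{Q}$ extends to a finite superset, and fresh rationals can always be chosen so as to keep the figure independent. Letting $F_i$ be the resulting image, we get $F_i\supseteq F_{i-1}$, $F_i\equivfig\tau\in\C$, and $F_i$ contains a copy of $\gamma_i$.

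Finally set $F=\bigcup_i F_i$, let $A$ and $B$ be the (countable) sets of first and second coordinates appearing among the points of $F$, and let $f\st A\to B$ send each first coordinate to the second coordinate of the point sharing it; this is well defined and bijective because $F$ is independent. Then $\Age(f\st A\to B)=\C$: the inclusion $\subseteq$ holds because every finite subset of $F$ lies in some $F_i$ and is therefore involved in a permutation of the downset $\C$, and the inclusion $\supseteq$ holds because $\gamma_i\in\Age(f\st A\to B)$ by construction for every $i$.

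I expect the only real obstacle to be the bookkeeping in the inductive step of $(2)\Rightarrow(3)$: one must simultaneously keep $F_{i-1}$ intact so that the chain genuinely increases, keep the enlarged figure independent and inside $\mathbb{Q}^2$, and guarantee that it remains equivalent to a member of $\C$. All three are handled by choosing $\tau$ via the joint embedding property and by the homogeneity of $\mathbb{Q}$, but the extension-of-a-partial-isomorphism argument should be written out with some care. A trivial edge case worth a remark: if one does not regard the empty permutation as belonging to every class, condition $(3)$ should be read with the convention that the age of the empty bijection is the one-element class consisting of the empty permutation.
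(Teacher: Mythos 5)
The paper does not prove this theorem at all: it is stated as a known result of Fra\"{\i}ss\'e, with pointers to Hodges and to Atkinson--Murphy--Ru\v{s}kuc for a proof in the language of permutations. Your argument is correct and is essentially the standard one found in those sources: the equivalence $(1)\Leftrightarrow(2)$ by the two-downset contrapositive, $(3)\Rightarrow(2)$ by restricting $f$ to $A_\pi\cup A_\sigma$, and $(2)\Rightarrow(3)$ by the Fra\"{\i}ss\'e-style ``forth'' chain $F_0\subseteq F_1\subseteq\cdots$ of finite independent subsets of $\mathbb{Q}^2$, using the density of $\mathbb{Q}$ to extend the partial order isomorphisms at each stage. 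The only point I would tighten is in the inductive step: independence of $F_i$ is automatic once the extensions $\opinj_x,\opinj_y$ are chosen to be increasing injections on the coordinate sets of the plot of $\tau$ (so no separate ``choose fresh rationals'' step is needed), and one should note explicitly that any finite subset of $F=\bigcup_i F_i$ lies in a single $F_i$ because the chain is increasing and each $F_i$ is finite --- this is what makes $\Age(f\st A\to B)\subseteq\C$ go through. Your closing remark about the empty permutation is a reasonable convention to record.
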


The next proposition is a specialisation of standard lattice-theoretic facts which may be found in more general terms in many sources, such as Birkhoff~\cite{birkhoff:lattice-theory:}.

\begin{proposition}\label{pwo-atomic-union}
Every pwo permutation class can be expressed as a finite union of atomic classes.
\end{proposition}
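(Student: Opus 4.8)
The plan is to deduce this from the standard lattice-theoretic fact that in a poset with no infinite antichains and no infinite descending chains, every downset is a finite union of principal-type irreducible downsets, but carried out concretely in the language of permutation classes so that it can be self-contained. First I would record the key structural consequence of being pwo: if $\C$ is a pwo permutation class, then among the antichains of minimal elements of various subclasses everything stays finite, and moreover $\C$ has only finitely many \emph{maximal} atomic subclasses. Concretely, I would argue as follows. Call a subclass $\D\subseteq\C$ atomic if it satisfies the joint embedding property (equivalently, by Theorem~\ref{atomic-tfae}, is join-irreducible). The first step is to show that every permutation $\pi\in\C$ lies in some atomic subclass of $\C$ that is maximal among atomic subclasses of $\C$; this uses Zorn's Lemma on the poset of atomic subclasses of $\C$ containing $\pi$, the point being that a union of a chain of atomic subclasses is atomic (given $\sigma,\tau$ in the union, both lie in one member of the chain, which being atomic contains a common extension). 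Hence $\C$ is the union of its maximal atomic subclasses.

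The second step is the crux: there are only finitely many maximal atomic subclasses of $\C$. Suppose for contradiction there were infinitely many, say $\A_1,\A_2,\dots$, pairwise incomparable (they are incomparable because each is maximal). For each pair $i\neq j$ the class $\A_i\cup\A_j$ is \emph{not} atomic (else it would be an atomic subclass properly containing $\A_i$), so by the joint embedding characterisation there exist $\pi_i\in\A_i$ and $\pi_j\in\A_j$ with no common extension inside $\A_i\cup\A_j$ — in particular no common extension inside $\C$ that lies in $\A_i\cup\A_j$. From here I would extract an infinite antichain in $\C$, contradicting pwo. The mechanism: choose, for each $i$, a permutation $\sigma_i\in\A_i$ that is not contained in $\A_j$ for finitely many competing $j$'s and inductively refine — more cleanly, use the fact that $\C$ is pwo to replace the $\A_i$ by a subsequence along which a fixed finite ``obstruction pattern'' separates them, and then the minimal witnesses form an antichain. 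The hard part will be making this extraction rigorous: turning ``infinitely many pairwise non-jointly-embeddable maximal atomic classes'' into a genuine infinite antichain of permutations. The slick way is to invoke that pwo posets have the property that any downset decomposes as a finite union of irreducibles (this is exactly the ``finite union of atomic classes'' statement for general pwo posets, and is where Birkhoff is cited), but since the proposition is stated as a specialisation, I expect the intended argument simply quotes that general fact.

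So the cleanest route, matching the phrasing ``specialisation of standard lattice-theoretic facts,'' is: (i) observe that atomic is the same as join-irreducible (Theorem~\ref{atomic-tfae}); (ii) note a permutation class is a downset, and pwo says the poset of permutations ordered by involvement has no infinite antichain; (iii) apply the general order-theoretic theorem that in a poset satisfying the descending chain condition and having no infinite antichain, every downset is the union of finitely many join-irreducible downsets (equivalently, of the downsets generated by its finitely many maximal ``connected'' pieces), with the finiteness coming precisely from the absence of infinite antichains among the maximal irreducible subdownsets; (iv) translate back to permutation classes. I would present steps (i)–(iv), flag that the only genuine content beyond bookkeeping is the finiteness in (iii), and cite Birkhoff~\cite{birkhoff:lattice-theory:} for it rather than reprove it. If a self-contained argument is wanted, I would include the Zorn's Lemma argument of the previous paragraph for existence of maximal atomic subclasses and then the antichain-extraction argument for their finiteness, with the latter being the step I expect to require the most care.
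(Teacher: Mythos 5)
The paper offers no argument for this proposition beyond the citation of Birkhoff, so your route (i)--(iv) --- identify atomic with join-irreducible via Theorem~\ref{atomic-tfae} and quote the general order-theoretic fact --- is exactly what the authors do, and as a reading of their intent it is correct.

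However, the self-contained argument you sketch has a genuine gap at precisely the step you flag. The Zorn's Lemma half is fine (a union of a chain of atomic classes is atomic, and every $\pi$ lies in the atomic class of all $\sigma\le\pi$), but the plan to prove there are only finitely many maximal atomic subclasses by extracting an infinite antichain of \emph{permutations} from an infinite family of pairwise-incomparable atomic subclasses does not go through. The obstruction: from $\A_i\not\subseteq\A_j$ you get witnesses $\pi_{ij}\in\A_i\setminus\A_j$, and joint embedding lets a single element of $\A_i$ avoid any \emph{finite} collection of the other classes, but never all infinitely many at once; the elements $z_i\in\A_i\setminus(\A_1\cup\dots\cup\A_{i-1})$ that you can actually construct satisfy only $z_j\not\le z_i$ for $i<j$, which is not an antichain and is perfectly consistent with pwo. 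More pointedly, pairwise-incomparable downsets of a pwo poset need not yield any infinite antichain of elements --- Rado's example is a pwo poset admitting an infinite antichain of downsets under inclusion --- so finiteness of the maximal atomic subclasses cannot follow formally from their pairwise incomparability; it is normally deduced \emph{from} the decomposition rather than used to prove it. The standard self-contained proof avoids maximal atomic classes entirely: the downsets of a pwo poset satisfy the descending chain condition (if $\D_1\supsetneq\D_2\supsetneq\cdots$, pick $x_i\in\D_i\setminus\D_{i+1}$; pwo gives $i<j$ with $x_i\le x_j\in\D_{i+1}$, forcing $x_i\in\D_{i+1}$, a contradiction), so one may take a minimal counterexample $\C$; it is not atomic, so some $\pi,\sigma\in\C$ admit no common extension in $\C$, whence $\C=\{\tau\in\C : \pi\not\le\tau\}\cup\{\tau\in\C : \sigma\not\le\tau\}$ expresses $\C$ as a union of two proper subclasses, each a finite union of atomic classes by minimality. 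If you want a proof rather than a citation, that is the one to write.
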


In order to describe the atomic geometrically griddable classes as $\Geom(M)$ for certain matrices $M$, we allow our matrices to contain entries equal to $\bullet$, to signify cells in which a permutation may contain at most one point.  We have to be a bit careful here, as it is unclear how one should interpret $\fnmatrix{rr}{\bullet&\bullet}$.  We simply forbid such configurations, in the sense formalised by the following definitions.

Suppose that $M$ is a $\zdpm$ matrix, meaning that each entry of $M$ lies in $\{0,\bullet,1,-1\}$.  We say that $M$ is $\bullet$-isolated if every $\bullet$ entry is the only nonzero entry in its column and row.  Given a $\bullet$-isolated $\zdpm$ matrix $M$, its \emph{standard figure} is the point set in $\mathbb{R}^2$ consisting of:
\begin{itemize}
\item a single point at $(k-\nicefrac{1}{2},\ell-\nicefrac{1}{2})$ if $M_{k,\ell}=\bullet$,
\item the line segment from $(k-1,\ell-1)$ to $(k,\ell)$ if $M_{k,\ell}=1$, or
\item the line segment from $(k-1,\ell)$ to $(k,\ell-1)$ if $M_{k,\ell}=-1$.
\end{itemize}
We can then extend the notion of geometric grid classes to $\zdpm$ matrices in the obvious manner, and we have the following result.

\begin{theorem}
\label{thm-geom-griddable-union-atomics}
The atomic geometrically griddable classes are precisely the geometric grid classes of $\bullet$-isolated $\zdpm$ matrices, and every geometrically griddable class can be expressed as a finite union of such classes.
\end{theorem}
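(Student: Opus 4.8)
The plan is to prove the characterization first --- that the atomic geometrically griddable classes are exactly the classes $\Geom(M)$ with $M$ a $\bullet$-isolated $\zdpm$ matrix --- and then deduce the decomposition statement. That deduction is immediate: a geometrically griddable class $\C$ is pwo by Theorem~\ref{thm-geom-griddable-pwo}, hence a finite union $\C=\A_1\cup\cdots\cup\A_m$ of atomic classes by Proposition~\ref{pwo-atomic-union}; each $\A_i$ is a subclass of $\C$, hence geometrically griddable, and so by the characterization $\A_i=\Geom(M_i)$ for a $\bullet$-isolated $\zdpm$ matrix $M_i$.

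For the easy direction --- that $\Geom(M)$ is atomic when $M$ is $\bullet$-isolated --- I would verify the joint embedding property and invoke Theorem~\ref{atomic-tfae}. Given $\pi,\sigma\in\Geom(M)$, draw each by a finite independent point set on the standard figure $\Lambda$ of $M$. Each $\bullet$-cell of $\Lambda$ is a single point, so the two drawings automatically agree wherever both use such a cell; superimpose the drawings and perturb the points lying on the line segments of $\Lambda$ to remove coincidences. Because every $\bullet$-cell is the only nonzero cell of its row and its column, its point never coincides with another, so this perturbation leaves the figure independent, and it realizes a permutation $\tau\in\Geom(M)$ that contains both $\pi$ and $\sigma$ --- the two witnessing subsequences overlapping exactly in the $\bullet$-cells used by both. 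Hence $\Geom(M)$ has the joint embedding property and is atomic.

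The substantial direction is the converse. Let $\C$ be atomic and geometrically griddable; by Proposition~\ref{prop-geom-pmm} fix a partial multiplication matrix $P$ with $\C\subseteq\Geom(P)$, and write $\Lambda$ for its standard figure. Using the joint embedding property, build greedily against an enumeration of $\C$ a chain $\pi^{(1)}\le\pi^{(2)}\le\cdots$ in $\C$ below which every element of $\C$ lies, and fix embeddings $\pi^{(n)}\hookrightarrow\pi^{(n+1)}$. Each $\pi^{(n)}$ has only finitely many $\Geom^\gridded(P)$ griddings, and each gridding of $\pi^{(n+1)}$ restricts along the fixed embedding to one of $\pi^{(n)}$; König's lemma, applied to this inverse system of finite nonempty sets, yields compatible griddings, which I realize as nested finite independent point sets $F_1\subseteq F_2\subseteq\cdots\subseteq\Lambda$ with $F_n$ drawing $\pi^{(n)}$. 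Put $S=\bigcup_n F_n$. Then $S$ is countable and independent and $\Sub(S)=\C$: every finite subset of $S$ lies in some $F_n$ and so draws a subpermutation of $\pi^{(n)}\in\C$, while conversely each member of $\C$ lies below some $\pi^{(n)}$ and so is drawn by a subset of $F_n\subseteq S$. Now read a matrix $M$ off $S$: keep the entry $P_{k,\ell}$ in every cell containing infinitely many points of $S$, put $0$ in every empty cell, and in every remaining (nonempty but finite) cell isolate each of its finitely many points of $S$ into a freshly created column and row, exactly as in the proof of Theorem~\ref{thm-geom-griddable-one-point-extension}, turning it into a $\bullet$ entry. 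These finitely many refinements leave $S$ sitting in the standard figure of the resulting $\bullet$-isolated $\zdpm$ matrix $M$, so $\C=\Sub(S)\subseteq\Geom(M)$. The reverse inclusion $\Geom(M)\subseteq\C$ is equivalent, via the finite basis of $\C$ (Theorem~\ref{thm-geom-griddable-fin-basis}), to the assertion that every permutation drawable on the standard figure of $M$ is already drawable using points of $S$; this holds provided that in each cell carrying a $\pm1$ entry the infinite set of points of $S$ it contains is coinitial, cofinal, and order-dense in the cell, which can be arranged while choosing the $F_n$, since the interleaving constraints of a partial multiplication matrix leave enough freedom in the offsets to fill every gap and approach both ends of each such cell.

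The main obstacle is precisely this last point: forcing the limiting point set $S$ to be \emph{dense} in the cells destined to carry $\pm1$ entries, and checking that the cell-by-cell refinement into $\bullet$ entries interacts harmlessly with those cells (for instance, that slicing out a $\bullet$ does not fragment a dense cell into pieces of bounded content and thereby restart the process), so that in the end $\Geom(M)$ equals $\C$ rather than merely containing it. The remaining ingredients --- the greedy construction of the chain, the König's lemma coherence argument, the $\bullet$-isolating refinement imported from Theorem~\ref{thm-geom-griddable-one-point-extension}, and the reduction of the decomposition statement --- are routine given the machinery already developed.
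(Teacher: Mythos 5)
Your overall architecture (prove the characterisation first, then deduce the decomposition from Theorem~\ref{thm-geom-griddable-pwo} and Proposition~\ref{pwo-atomic-union}) is sound, and your joint-embedding argument for the easy direction matches the paper's. The paper runs the logic the other way: it proves the decomposition directly by pulling $\C$ back along $\bij$ to the subword-closed language $\bij^{-1}(\C)$, applying Proposition~\ref{prop-subword-closed-decomp}, and realising each piece $\Sigma_1^\ast\{\emptyword,a_2\}\cdots\Sigma_{2q+1}^\ast$ as $\Geom(M_L)$ for a $\bullet$-isolated matrix carved out of the standard figure of $M^{\times(2q+1)}$; the characterisation of the atomic classes then falls out. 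Your converse direction, via a cofinal chain, K\"onig's lemma, and a limiting point set $S$, is a genuinely different route --- but it has a gap exactly where you flag one, and the gap is fatal rather than technical.

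The density, coinitiality and cofinality you require in the infinite cells \emph{cannot} in general be arranged, and as a consequence your rule for reading $M$ off $S$ (retain $P_{k,\ell}$ in every cell that $S$ meets infinitely often) produces the wrong matrix. Take two nonzero cells $a$ and $b$ of $P$ sharing a column and let $\C=\bij(a^\ast b^\ast)$; this is the image of a subword-closed language, hence a class, and it is atomic (it is one of the paper's $\Geom(M_L)$). Every word encoding an element of $\C$ has all of its $a$'s before all of its $b$'s, so in any realisation every point placed in cell $a$ has smaller offset than every point placed in cell $b$; the $a$-points of $S$ therefore cannot be cofinal in their line segment once cell $b$ is occupied, no matter how the $F_n$ are chosen. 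Your matrix $M$ would nevertheless keep full $\pm1$ entries in both cells, and $\Geom(M)$ would contain permutations requiring an $a$-point at larger offset than some $b$-point --- permutations not in $\C$ --- so $\Geom(M)\supsetneq\C$. The missing information is precisely how the occupied portions of cells sharing a row or column interleave, which is what the alternating blocks $\Sigma_1^\ast,\{\emptyword,a_2\},\Sigma_3^\ast,\dots$ of Proposition~\ref{prop-subword-closed-decomp} record; this is why the paper routes the argument through the language decomposition and the refinement $M^{\times(2q+1)}$ rather than through a limiting point set. To repair your argument you would effectively have to re-derive that decomposition, partitioning each infinite cell into the finitely many ``phases'' in which it is visited relative to the other cells of its row and column.
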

\begin{proof}
First suppose that $M$ is a $\bullet$-isolated $\zdpm$ matrix.  It is clear from the geometric description of $\Geom(M)$ that given any two permutations $\pi,\sigma\in\Geom(M)$, there is a permutation $\tau\in\Geom(M)$ such that $\tau\ge\pi,\sigma$, so such classes satisfy the joint embedding property and are thus atomic by Theorem~\ref{atomic-tfae}.

Next we show that every geometrically griddable class can be expressed as a finite union of classes of the form $\Geom(M)$ where $M$ is a $\bullet$-isolated $\zdpm$ matrix.  Note that this will imply that the only atomic geometric griddable classes are of the latter type.

Let $\C$ be a geometrically griddable class.  By Proposition~\ref{prop-geom-pmm}, $\C\subseteq\Geom(M)$ for some partial multiplication matrix $M$ (note that $M$ is a $\zpm$ matrix) with cell alphabet $\Sigma$.  Since the encoding map $\bij\st\Sigma^\ast\rightarrow\Geom(M)$ is order-preserving (Proposition~\ref{prop-properties-of-bij}), the preimage $\bij^{-1}(\C)$ is a subword-closed language.  By Proposition~\ref{prop-subword-closed-decomp}, we know that $\bij^{-1}(\C)$ is a finite union of languages of the form
\begin{equation}\label{eqn-subword-form}
\Sigma_1^\ast \{\emptyword, a_2\} \Sigma_3^\ast\{\emptyword, a_4\} \ldots \Sigma_{2q}^\ast \{\emptyword, a_{2q}\} \Sigma_{2q+1}^\ast
\tag{$\dagger$}
\end{equation}
where $q\geq 0$, $\Sigma_1,\ldots,\Sigma_{2q+1}\subseteq \Sigma$, and $a_2,\ldots ,a_{2q}\in \Sigma$.

Let $L$ denote an arbitrary language of the form \eqref{eqn-subword-form}. We will show that $\bij(L)=\Geom(M_L)$ for some $\bullet$-isolated $\zdpm$ matrix $M_L$, from which the result will follow.  We start with the standard gridded figure $\Lambda^\gridded=(\Lambda,G)$ of $M^{\times (2q+1)}$.  Recall that each cell of the standard gridded figure of $M$ becomes $(2q+1)^2$ cells in $\Lambda^\gridded$ of which $(2q+1)$ are nonempty; we use this to label the nonempty cells of $\Lambda^\gridded$ by $C_{k,\ell}^{(s)}$ for $s\in[2q+1]$, in order of increasing distance from the base point as it would be in the standard gridded figure of $M$.

The permutations in $\bij(L)$ are then equivalent to finite independent sets $P\subseteq\Lambda$ of the following form:
\begin{itemize}
\item For odd $s\in[2q+1]$, $P$ may contain any points of $\Lambda$ belonging to cells $C_{k,\ell}^{(s)}$ for any $k$, $\ell$ such that $a_{k,\ell}\in \Sigma_s$, and no points from other cells.
\item For even $s\in[2q+1]$, $P$ may contain at most one point from the cell $C_{k,\ell}^{(s)}$ where $a_{k,\ell}=a_s$, and no points from any other cells.
\end{itemize}

\begin{figure}
\begin{center}
\begin{tabular}{ccc}
\psset{xunit=0.017in, yunit=0.017in}
\psset{linewidth=0.005in}
\begin{pspicture}(-3,-3)(120,80)
\psline[linecolor=black,linestyle=solid,linewidth=0.02in](0,40)(40,0)
\psline[linecolor=black,linestyle=solid,linewidth=0.02in](0,40)(40,80)
\psline[linecolor=black,linestyle=solid,linewidth=0.02in](40,80)(80,40)
\psline[linecolor=black,linestyle=solid,linewidth=0.02in](80,40)(120,80)
\psline[linecolor=darkgray,linestyle=solid,linewidth=0.02in]{c-c}(0,0)(0,80)
\psline[linecolor=darkgray,linestyle=solid,linewidth=0.02in]{c-c}(40,0)(40,80)
\psline[linecolor=darkgray,linestyle=solid,linewidth=0.02in]{c-c}(80,0)(80,80)
\psline[linecolor=darkgray,linestyle=solid,linewidth=0.02in]{c-c}(120,0)(120,80)
\psline[linecolor=darkgray,linestyle=solid,linewidth=0.02in]{c-c}(0,0)(120,0)
\psline[linecolor=darkgray,linestyle=solid,linewidth=0.02in]{c-c}(0,40)(120,40)
\psline[linecolor=darkgray,linestyle=solid,linewidth=0.02in]{c-c}(0,80)(120,80)
\psline[linecolor=black,linestyle=solid,linewidth=0.01in,arrowsize=0.05in]{<-c}(-3,1)(-3,39)
\psline[linecolor=black,linestyle=solid,linewidth=0.01in,arrowsize=0.05in]{c->}(-3,41)(-3,79)
\psline[linecolor=black,linestyle=solid,linewidth=0.01in,arrowsize=0.05in]{c->}(1,-3)(39,-3)
\psline[linecolor=black,linestyle=solid,linewidth=0.01in,arrowsize=0.05in]{<-c}(41,-3)(79,-3)
\psline[linecolor=black,linestyle=solid,linewidth=0.01in,arrowsize=0.05in]{c->}(81,-3)(119,-3)
\end{pspicture}
&\rule{0in}{0pt}&
\psset{xunit=0.017in, yunit=0.017in}
\psset{linewidth=0.005in}
\begin{pspicture}(-3,-3)(120,80)
%
\psline[linecolor=black,linestyle=solid,linewidth=0.02in](0,40)(8,32)
\psline[linecolor=black,linestyle=solid,linewidth=0.02in](16,24)(24,16)
\psline[linecolor=black,linestyle=solid,linewidth=0.02in](0,40)(8,48)
\psline[linecolor=black,linestyle=solid,linewidth=0.02in](16,56)(24,64)
\psline[linecolor=black,linestyle=solid,linewidth=0.02in](32,72)(40,80)
\pscircle*(52,68){0.04in}
\pscircle*(68,52){0.04in}
\psline[linecolor=black,linestyle=solid,linewidth=0.02in](96,56)(104,64)
\psline[linecolor=black,linestyle=solid,linewidth=0.02in](112,72)(120,80)
\psline[linecolor=darkgray,linestyle=solid,linewidth=0.02in]{c-c}(0,0)(0,80)
\psline[linecolor=darkgray,linestyle=solid,linewidth=0.02in]{c-c}(40,0)(40,80)
\psline[linecolor=darkgray,linestyle=solid,linewidth=0.02in]{c-c}(80,0)(80,80)
\psline[linecolor=darkgray,linestyle=solid,linewidth=0.02in]{c-c}(120,0)(120,80)
\psline[linecolor=darkgray,linestyle=solid,linewidth=0.02in]{c-c}(0,0)(120,0)
\psline[linecolor=darkgray,linestyle=solid,linewidth=0.02in]{c-c}(0,40)(120,40)
\psline[linecolor=darkgray,linestyle=solid,linewidth=0.02in]{c-c}(0,80)(120,80)
\multips(0,0)(0,8){10}{%
	\psline[linecolor=darkgray,linestyle=solid,linewidth=0.01in]{c-c}(0,0)(120,0)
}
\multips(0,0)(8,0){15}{%
	\psline[linecolor=darkgray,linestyle=solid,linewidth=0.01in]{c-c}(0,0)(0,80)
}
\multips(0,0)(0,8){5}{%
	\psline[linecolor=black,linestyle=solid,linewidth=0.01in,arrowsize=0.05in]{<-c}(-3,1)(-3,7)
}
\multips(0,0)(0,8){5}{%
	\psline[linecolor=black,linestyle=solid,linewidth=0.01in,arrowsize=0.05in]{c->}(-3,41)(-3,47)
}
\multips(0,0)(8,0){5}{%
	\psline[linecolor=black,linestyle=solid,linewidth=0.01in,arrowsize=0.05in]{c->}(1,-3)(7,-3)
}
\multips(0,0)(8,0){5}{%
	\psline[linecolor=black,linestyle=solid,linewidth=0.01in,arrowsize=0.05in]{<-c}(41,-3)(47,-3)
}
\multips(0,0)(8,0){5}{%
	\psline[linecolor=black,linestyle=solid,linewidth=0.01in,arrowsize=0.05in]{c->}(81,-3)(87,-3)
}
\end{pspicture}
\end{tabular}
\end{center}
\caption[]{The standard gridded figure of the matrix $\fnmatrix{rrr}{1&-1&1\\-1&0&0}$ is shown on the left, while the figure on the right displays the subfigure for the subclass encoded by the language $\{\cell{1}{1}, \cell{1}{2}\}^\ast \{\emptyword, \cell{2}{2}\} \{\cell{1}{1}, \cell{1}{2}, \cell{3}{2}\}^\ast \{\emptyword, \cell{2}{2}\} \{\cell{1}{2}, \cell{3}{2}\}^\ast$.}
\label{fig-atomic-decomp-lang}
\end{figure}

Thus $\bij(L)=\Sub(\Lambda_L)$ where $\Lambda_L\subseteq \Lambda$ consists of:
\begin{itemize}
\item[(1)] all line segments of $\Lambda$ in the cells $C_{k,\ell}^{(s)}$ where $s\in[2q+1]$ is odd and $a_{k,\ell}\in\Sigma_s$, and
\item[(2)] the centre point of the subcell $C_{k,\ell}^{(s)}$ where $s\in[2q+1]$ is even and $a_s=a_{k,\ell}$.
\end{itemize}
Figure~\ref{fig-atomic-decomp-lang} shows an example of this construction.  The subfigure $\Lambda_L$ is clearly the standard gridded figure of some $\zdpm$ matrix $M_L$.  Moreover, as $\bullet$ entries can only arise from case (2) above, it follows that $M_L$ is $\bullet$-isolated, completing the proof.
\end{proof}

\section{Concluding Remarks}
\label{sec-concluding-remarks}

We have provided a comprehensive toolbox of results applicable to geometrically griddable classes, so perhaps the most immediate question is: how can one tell if a permutation class is geometrically griddable?  Huczynska and Vatter~\cite{huczynska:grid-classes-an:} have shown that a class is contained in a monotone grid class (i.e., it is \emph{griddable}) if and only if it does not contain arbitrarily long sums of $21$ or skew sums of $12$.  However, $\Grid(M)\neq\Geom(M)$ when $M$ is not a forest, so it remains to determine the precise border between griddability and \emph{geometric} griddability.

None of the major proofs in the preceding sections are effective, in as much as they all appeal to the finiteness of certain antichains of words, which follows nonconstructively from Higman's Theorem.  Therefore these proofs do not provide algorithms to accomplish any of the following:
\begin{itemize}
\item Given a $\zpm$ matrix $M$, compute the basis of $\Geom(M)$.  In particular, any bound on the length of the basis elements would provide such an algorithm.
\item Given a $\zpm$ matrix $M$, compute the generating function for any of: $\Geom(M)$, the simple permutations in $\Geom(M)$, etc.
\item Given a $\zpm$ matrix $M$ and a finite set of permutations $B$, determine the atomic decomposition of $\Geom(M)\cap\Av(B)$, and/or its enumeration.
\end{itemize}

An intriguing, and somewhat different, question is the membership problem.  Given a $\zpm$ matrix $M$, how efficiently (as a function of $n$) can one determine if a permutation of length $n$ lies in $\Geom(M)$?  Because geometric grid classes are finitely based, this problem is guaranteed to be polynomial-time, but it could conceivably be linear-time.  Such a result would extend the parallel between geometric grid classes and subword-closed languages, because the latter (and indeed all regular languages) have linear-time membership problems.  

While we believe that geometric grid classes play a special role in the structural theory of permutation classes, their non-geometric counterparts also present many natural questions.  Perhaps the most natural is the finite basis question.  Does the class $\Grid(M)$ have a finite basis for every $\zpm$ matrix $M$?  We feel that the answer should be ``yes'', but have scant evidence.  In his thesis, Waton \cite{waton:on-permutation-:} proves that the grid class
$$
\Grid\fnmatrix{rr}{1&1\\1&1}
$$
is finitely based.

Another example of a finitely based non-geometric grid class appears in one of the earliest papers on permutation patterns, where Stankova \cite{stankova:forbidden-subse:} proves that the class of permutations which can be expressed as the union of an increasing and a decreasing subsequence, called the \emph{skew-merged permutations}, has the basis $\{2143, 3412\}$.  In our notation, this class is
$$
\Grid\fnmatrix{rr}{-1&1\\1&-1}.
$$

The class of skew-merged permutations is also notable because it is the only non-geometric grid class with a known generating function,
$$
\frac{1-3x}{(1-2x)\sqrt{1-4x}},
$$
due to Atkinson~\cite{atkinson:permutations-wh:}.  Could it be the case that all (monotone) grid classes have algebraic generating functions?  A first step in this direction might be a more structural derivation of the generating function for skew-merged permutations.

\bibliographystyle{acm}
\bibliography{../refs}

\end{document}